\font\script=rsfs10 at 11pt
\def\H{{\mbox{\script H}\;}}
\def\div{\mathrm{div}\,}
\def\step#1#2{\par\noindent{\underline{\it Step~#1.}}\emph{ #2}\\}
\def\C{{\mathcal C}}
\def\eps{\varepsilon}
\def\L{{\mathcal L}}
\def\N{{\mathbb N}}
\def\R{{\mathbb R}}
\def\bal{\begin{aligned}}
\def\eal{\end{aligned}}
\newtheorem{Lemma}{Lemma}
\newtheorem{Theorem}[Lemma]{Theorem}
\newtheorem{definition}[Lemma]{Definition}
\newtheorem{Remark}[Lemma]{Remark}
\begin{document}
%
%-------%
% TITLE %
%-------%
%------------------------------------------%
%------------------------------------------%
\title{\textbf{\Large The Cheeger constant of curved strips}}
\author{David Krej\v{c}i\v{r}{\'\i}k$\,^{a, b}$
\ and \ Aldo Pratelli$\,^c$}

\date{\small
%
%\begin{center}
%\begin{quote}
%\begin{enumerate}
%\item[$a)$]
\emph{$a)$
Department of Theoretical Physics,
Nuclear Physics Institute ASCR, \\
25068 \v Re\v z, Czech Republic;
krejcirik@ujf.cas.cz}
\medskip \\
%\item[$b)$]
\emph{$b)$
IKERBASQUE, Basque Foundation for Science, 48011 Bilbao, Spain}
\medskip \\
%\item[$c)$]
\emph{$c)$
Department of Mathematics ``F.~Casorati'',
University of Pavia, \\
Via Ferrata 1, 27100 Pavia, Italy;
pratelli@unipv.it}
%\end{enumerate}
%\end{quote}
%\end{center}
%
\bigskip \\
29 February 2012
}

\maketitle
\begin{abstract}
\noindent
We study the Cheeger constant and Cheeger set for
domains obtained as strip-like neighbourhoods of curves in the plane.
If the reference curve is complete and finite (a ``curved annulus''),
then the strip itself is a Cheeger set
and the Cheeger constant equals the inverse of the half-width of the strip.
The latter holds true for unbounded strips as well,
but there is no Cheeger set.
Finally, for strips about non-complete finite curves,
we derive lower and upper bounds to the Cheeger set,
which become sharp for infinite curves.
The paper is concluded by numerical results for circular sectors.
\end{abstract}
%---------------------%

\section{Introduction}
Let~$\Omega$ be an open connected set in the plane $\R^2$.
The \emph{Cheeger constant} of $\Omega$ is defined as
\begin{equation}\label{Ch.constant}
h(\Omega) := \inf_{S\subseteq\Omega} \frac{P(S)}{|S|} \,,
\end{equation}
where the infimum is taken over all sets~$S\subseteq \Omega$ of finite perimeter. 
Here and in the following, $P(S)$ and~$|S|$ denote the perimeter 
and the area of~$S$, respectively. 
Any minimizer of~\eqref{Ch.constant}, if it exists, 
is called a \emph{Cheeger set} of~$\Omega$ and denoted by~$\C_\Omega$.\par

The problems of existence, uniqueness and regularity of Cheeger sets
have been widely studied in last years, for instance
one may look at~\cite{Kawohl-Fridman_2003,Hebey-Saintier_2006,Saintier,CCN}. We briefly list and discuss here some of the general known properties.

\begin{Theorem}[General known facts]\label{genprop}~
\begin{itemize}
\item[\emph{(i)}]
While for a general $\Omega$ neither existence nor uniqueness is guaranteed, there is always some Cheeger set if $\Omega$ is a bounded open set.
\item[\emph{(ii)}]
If $\Omega_1\subseteq\Omega_2$, then $h(\Omega_1)\geq h(\Omega_2)$, but the strict inclusion does not imply the strict inequality.
\item[\emph{(iii)}]
The boundary of any Cheeger set $\C_\Omega$ intersects the boundary of the set $\Omega$.
\item[\emph{(iv)}]
The part of $\partial\C_\Omega$ which is inside $\Omega$ is made by arcs of circle, all of radius $1/h(\Omega)$, and each of which starts and ends touching the boundary of $\Omega$.
\item[\emph{(v)}] A Cheeger set cannot have corners (i.e., discontinuities in the tangent vector to the boundary giving rise to an angle smaller than $\pi$). In particular, the arcs of circle of $\partial\C\cap \Omega$ must intersect the boundary of $\Omega$ tangentially or in ``open corners'' (i.e., angles bigger than $\pi$).
\item[\emph{(vi)}]
If there is some Cheeger set, there is a connected Cheeger set.
\end{itemize}
\end{Theorem}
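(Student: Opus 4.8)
The plan is to treat the six items by type: the soft functional-analytic facts (i)--(ii), the reduction (vi), and the geometric/regularity facts (iii)--(v), the last group being the technical heart.

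For (ii), monotonicity is immediate from the definition~\eqref{Ch.constant}: if $\Omega_1\subseteq\Omega_2$ then every admissible competitor $S\subseteq\Omega_1$ is also admissible for $\Omega_2$, so the infimum defining $h(\Omega_2)$ is taken over a larger class and can only be smaller; hence $h(\Omega_1)\ge h(\Omega_2)$. To see that strict inclusion need not give strict inequality, I would exhibit an example in which removing from $\Omega_2$ a set that the Cheeger set does not ``feel'' leaves both $\C_\Omega$ and $h$ unchanged (e.g.\ puncturing a disc, or removing a thin outward spike). For the existence half of (i) I would run the direct method: take a minimizing sequence $(S_n)$; the isoperimetric inequality bounds $P(S)/|S|$ from below on sets of small area, so along the minimizing sequence the areas stay bounded away from $0$, while boundedness of $\Omega$ keeps $|S_n|$ and, by near-minimality, $P(S_n)$ bounded; $BV$-compactness then yields $L^1$-convergence of a subsequence of the $\chi_{S_n}$ to some $\chi_{S_\infty}$ with $S_\infty\subseteq\overline\Omega$, and lower semicontinuity of the perimeter together with continuity of the area shows that $S_\infty$ attains the infimum. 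The non-existence and non-uniqueness assertions in (i) are by counterexample, the unbounded strip discussed in the abstract supplying the former.

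The geometric facts rest on the standard reformulation that a Cheeger set $\C_\Omega$ minimises the free-energy functional $J(S):=P(S)-h(\Omega)\,|S|$ over $S\subseteq\Omega$, with minimal value $J(\C_\Omega)=0$. For (iii), if $\partial\C_\Omega$ did not meet $\partial\Omega$ then $\C_\Omega$ would be compactly contained in $\Omega$ and a slightly dilated copy $t\,\C_\Omega$, $t>1$, would still lie in $\Omega$; since $P(t\,S)/|t\,S|=t^{-1}P(S)/|S|$, this strictly lowers the ratio, contradicting minimality. For (iv) I would take the first variation of $J$ under compactly supported normal perturbations of the portion of $\partial\C_\Omega$ lying in the interior of $\Omega$: stationarity forces the signed curvature of this free boundary to be constant and equal to $h(\Omega)$, and the only planar curves of constant curvature $h(\Omega)$ are arcs of circles of radius $1/h(\Omega)$; that each arc begins and ends on $\partial\Omega$ is forced by the same compact-containment argument as in (iii). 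For (v), the absence of inward corners follows from a rounding argument: replacing a corner of opening angle $<\pi$ by a small circular arc decreases the perimeter to first order in the cut-off size while decreasing the area only to second order, so $J$ strictly decreases, a contradiction; this leaves only tangential contact or outward (``open'') corners at $\partial\Omega$.

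Finally, (vi) is a mediant inequality. Writing a Cheeger set as the disjoint union of its connected components $\C_\Omega=\bigsqcup_i C_i$, additivity of perimeter and area gives
\[
h(\Omega)=\frac{P(\C_\Omega)}{|\C_\Omega|}=\frac{\sum_i P(C_i)}{\sum_i |C_i|}\ge \min_i \frac{P(C_i)}{|C_i|}\ge h(\Omega),
\]
the last inequality because each $C_i\subseteq\Omega$ is an admissible competitor. Hence some component realises the ratio $h(\Omega)$ and is itself a connected Cheeger set. I expect the genuine obstacle to be the rigour of (iv)--(v): the first-variation computation and the corner-rounding estimate are really assertions about the regularity of minimisers of a prescribed-curvature problem, and although in the plane the conclusion is classical, making the free-boundary analysis fully precise (in contrast to the soft arguments for (i), (ii), (vi)) is where the work lies. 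I would either invoke the standard geometric-measure-theoretic regularity theory or cite the references already listed.
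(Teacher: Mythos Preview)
Your proposal is correct and matches the paper's own treatment almost point by point: the paper, like you, handles (i) by $BV$-compactness and counterexamples, (ii) directly from the definition, (v) by the corner-cutting estimate ($O(\eps)$ perimeter gain versus $O(\eps^2)$ area loss), and (vi) by observing that each connected component of a Cheeger set is itself a Cheeger set via the mediant inequality.

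The one place you differ is (iv). The paper's justification is a single line: ``Property~(iv) comes immediately by a rescaling of $\C$ with a factor bigger than $1$, since this lowers the ratio in~(\ref{Ch.constant}).'' Taken literally, a global dilation argument yields (iii) rather than the constant-curvature conclusion of (iv); the paper is being terse to the point of opacity here. Your route via the first variation of $J(S)=P(S)-h(\Omega)\,|S|$, giving constant curvature $h(\Omega)$ on the free boundary and hence circular arcs of radius $1/h(\Omega)$, is the standard and transparent argument, and your remark that each arc must begin and end on $\partial\Omega$ by the compact-containment reasoning of (iii) is exactly right. Your closing caveat---that making (iv)--(v) fully rigorous requires invoking regularity theory for the prescribed-curvature obstacle problem---is also well placed; the paper simply cites the literature at that point rather than proving it.
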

Concerning property~(i), examples of non-existence or non-uniqueness can be found for instance in~\cite{Kawohl-Lachand-Robert_2006}, while the existence is immediate by the compactness results for BV functions (see for instance~\cite{Evans-Gariepy,AFP}). Property~(ii) is immediate by the definition~(\ref{Ch.constant}), and examples for the non strict inequality can be found again in~\cite{Kawohl-Lachand-Robert_2006}. Property~(iii) comes immediately by a rescaling of $\C$ with a factor bigger than $1$, since this lowers the ratio in~(\ref{Ch.constant}). Property~(iv) comes from a standard variational argument (see for instance~\cite[Remark~9]{Kawohl-Fridman_2003}). Property~(v) comes directly by noticing that ``cutting a corner'' of a small length $\eps$ decreases $|\C_\Omega|$ by at most $C\eps^2$ and the perimeter by at least $c\eps$. Here, and in the sequel, by ``corner'' we mean a point of the boundary where the tangent vector is discontinuous and makes an angle smaller than $\pi$ (with respect to the internal part of $\Omega$, of course). In the case of angles bigger than $\pi$, we talk about ``open corners'', and they cannot be excluded from $\partial\C$, since for instance, as pointed out in~\cite{Kawohl-Lachand-Robert_2006}, there are open corners
(or ``reentrant corners'' in their terminology) in an $\mathrm{L}$-shaped set.
Finally, property~(vi)
is immediate because if a Cheeger set has different connected components,
each of them must be also a Cheeger set
thanks to the characterization~(\ref{Ch.constant}).\par\bigskip

Apart from the above-mentioned general properties, it is usually a difficult task to find the Cheeger constant or the Cheeger set of a given domain $\Omega$. The situation is simplified when $\Omega$ is a bounded convex set, which is a well-studied particular situation. In fact, in this case it is known that there is a unique open Cheeger set, which is again convex (see~\cite{Alter-Caselles-Chambolle_2005a,Kawohl-Lachand-Robert_2006,CCN}). Moreover, it is also possible to give the following characterization.
\begin{Theorem}[\cite{Kawohl-Lachand-Robert_2006}]\label{Thm.convex}
Let~$\Omega$ be a bounded convex subset of~$\R^2$.
For $r \geq 0$, define
\[
\Omega^r := \{x \in \Omega  \ | \ \mathrm{dist}(x,\partial\Omega) > r \}\,.
\]
There exists a unique value $r=r^*>0$ such that
\begin{equation}\label{distances}
|\Omega^{r}|=\pi r^2 \,.
\end{equation}
Then $h(\Omega)=1/r^*$ and the Cheeger set of~$\Omega$ is the Minkowski sum $\C_\Omega = \Omega^{r^*} + B_{r^*}$, with~$B_{r^*}$ denoting the disc of radius~$r^*$.
\end{Theorem}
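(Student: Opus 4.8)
The plan is to prove the three assertions in turn: the existence and uniqueness of $r^*$, the upper bound $h(\Omega)\le 1/r^*$ via an explicit competitor, and the identification of the Cheeger set, which will simultaneously supply the matching lower bound. First I would settle $r^*$ by monotonicity. Set $\phi(r):=|\Omega^r|-\pi r^2$. Since $\Omega$ is convex, every inner parallel set $\Omega^r=\{x\in\Omega : \mathrm{dist}(x,\partial\Omega)>r\}$ is again convex, the map $r\mapsto|\Omega^r|$ is continuous and strictly decreasing until $\Omega^r$ becomes empty (at $r$ equal to the inradius $\rho$), while $r\mapsto\pi r^2$ is continuous and strictly increasing. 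Hence $\phi$ is continuous and strictly decreasing on $[0,\rho]$, with $\phi(0)=|\Omega|>0$ and $\phi(\rho)=-\pi\rho^2<0$, so $\phi$ has a unique zero $r^*\in(0,\rho)$. In particular $|\Omega^{r^*}|=\pi(r^*)^2>0$, so $\Omega^{r^*}$ is a genuine two-dimensional convex body.

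Next I would verify the upper bound using the planar Steiner formulas for the convex body $K:=\Omega^{r^*}$ and the disc $B_{r^*}$, namely $|K+B_{r^*}|=|K|+r^*P(K)+\pi(r^*)^2$ and $P(K+B_{r^*})=P(K)+2\pi r^*$. The Minkowski sum $\C:=\Omega^{r^*}+B_{r^*}$ is contained in $\Omega$, because every point of $\Omega^{r^*}$ lies at distance larger than $r^*$ from $\partial\Omega$. Substituting the defining relation $|K|=\pi(r^*)^2$ into the Steiner formulas gives
\[
\frac{P(\C)}{|\C|}=\frac{P(K)+2\pi r^*}{2\pi(r^*)^2+r^*P(K)}=\frac{1}{r^*}\,,
\]
so $\C$ is an admissible set with ratio $1/r^*$ and therefore $h(\Omega)\le 1/r^*$.

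The heart of the matter is the reverse inequality together with the identification of the Cheeger set. By Theorem~\ref{genprop}(i) a Cheeger set $\C_\Omega$ exists, and by the cited convexity results it may be taken convex; write $t:=1/h(\Omega)$. By Theorem~\ref{genprop}(iv)--(v) the free boundary $\partial\C_\Omega\cap\Omega$ consists of circular arcs of radius $t$ meeting $\partial\Omega$ tangentially. I would argue from this that a disc of radius $t$ rolls freely inside $\C_\Omega$ along its boundary, whence the inner parallel set of $\C_\Omega$ at distance $t$ coincides with $\Omega^t$ and $\C_\Omega=\Omega^t+B_t$. Feeding this into the Steiner formulas as above and imposing the optimality identity $P(\C_\Omega)/|\C_\Omega|=1/t$ collapses, after cancellation, to $|\Omega^t|=\pi t^2$; by the uniqueness established above, $t=r^*$, which yields $h(\Omega)=1/r^*$ and $\C_\Omega=\Omega^{r^*}+B_{r^*}$.

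The main obstacle is precisely the geometric identification $\C_\Omega=\Omega^t+B_t$: one must upgrade the local boundary information of (iv)--(v) to the global rolling-ball statement and verify the inner-parallel identity $(\C_\Omega)^t=\Omega^t$. Convexity of $\C_\Omega$ is what makes this possible, since it forces the free arcs to curve toward the exterior and prevents the rolling disc from being obstructed. Everything else reduces to the two Steiner formulas and the monotonicity already established.
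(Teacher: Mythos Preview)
The paper does not prove Theorem~\ref{Thm.convex} at all: it is quoted verbatim from \cite{Kawohl-Lachand-Robert_2006} as background and used later only as a tool (for rectangles and sectors). There is therefore no proof in the paper to compare your attempt against.

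As for the sketch itself, your first two steps are clean and correct: the monotonicity argument for the unique root $r^*$ is fine, and the Steiner computation showing that $\Omega^{r^*}+B_{r^*}$ realises the ratio $1/r^*$ is exactly right. The algebra you indicate at the end is also correct: once one knows $\C_\Omega=\Omega^t+B_t$ with $\Omega^t$ convex, the identity $P(\C_\Omega)/|\C_\Omega|=1/t$ does collapse to $|\Omega^t|=\pi t^2$, forcing $t=r^*$.

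You are equally right that the whole weight sits on the identification $\C_\Omega=\Omega^t+B_t$, and it is worth being explicit that this hides \emph{two} separate facts, not one. First, the rolling-ball property for $\C_\Omega$ requires the curvature of $\partial\C_\Omega$ to be at most $1/t$ also on the contact set $\partial\C_\Omega\cap\partial\Omega$; properties (iv)--(v) of Theorem~\ref{genprop} say nothing about that portion of the boundary, and one needs a first-variation argument (an inward perturbation supported where the curvature exceeds $1/t$ would lower the ratio) to obtain it. Second, even granting the rolling-ball property, you only get $\C_\Omega=(\C_\Omega)^t+B_t$; the further equality $(\C_\Omega)^t=\Omega^t$ amounts to showing that every closed disc of radius $t$ contained in $\Omega$ already lies in $\C_\Omega$, which is a genuine statement about maximality of the Cheeger set and is not a consequence of convexity of $\C_\Omega$ alone. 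Both points are handled in \cite{Kawohl-Lachand-Robert_2006}, and your outline would need to import those arguments to be complete.
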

\noindent
This theorem can be used to find explicitly $h(\Omega)$ and $\C_\Omega$ in some cases, for example for discs, rectangles and triangles -- in particular, the Cheeger sets of rectangles and triangles are obtained by suitably ``cutting the corners''.
Furthermore, it provides a constructive algorithm for the determination
of the Cheeger constant and Cheeger set for general convex domains,
in particular for convex polygons.

Unfortunately, there is no such a constructive method for non-convex domains. Only one particular case seems to be explicitly known in the literature, namely the annulus, for which it is known that $\C_\Omega=\Omega$. In general, while a trivial strategy to find upper estimates for $h(\Omega)$ is to choose a suitable ``test domain'' $S$ in~(\ref{Ch.constant}), it is less clear how to obtain lower estimates. One possibility is given by the following result concerning ``test vector fields''.
\begin{Theorem}[\cite{Grieser_2006}]\label{Thm.Grieser}
Let $V:\Omega\to\R^2$ be a smooth vector field on~$\Omega$, $h\in\R$, and assume that the pointwise inequalities $|V| \leq 1$ and $\div V \geq h$ hold in~$\Omega$. Then $h(\Omega) \geq h$.
\end{Theorem}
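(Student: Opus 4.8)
The plan is to prove the pointwise estimate $P(S)\ge h\,|S|$ for \emph{every} competitor $S\subseteq\Omega$ of finite perimeter; the conclusion $h(\Omega)\ge h$ then follows at once by taking the infimum in~\eqref{Ch.constant}. The mechanism is a duality between the two hypotheses, realised through the Gauss--Green (divergence) formula: the bulk bound $\div V\ge h$ forces $\int_S\div V$ to be large, the boundary bound $|V|\le 1$ forces the flux of $V$ across $\partial S$ to be small, and the divergence theorem equates these, yielding $h\,|S|\le P(S)$.

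First I would carry out the argument when $S$ is well inside $\Omega$. Invoking the divergence theorem for sets of finite perimeter, with $\partial^* S$ the reduced boundary, $\nu_S$ the measure-theoretic outer unit normal, and $\H(\partial^* S)=P(S)$ by De~Giorgi's structure theorem, one has
\[
\int_S \div V\,dx=\int_{\partial^* S} V\cdot\nu_S\,d\H .
\]
The left-hand side is bounded below by integrating the hypothesis, $\int_S\div V\,dx\ge h\,|S|$, while the right-hand side is bounded above by Cauchy--Schwarz together with $|V|\le 1$ and $|\nu_S|=1$, which give $V\cdot\nu_S\le 1$ on $\partial^* S$ and hence $\int_{\partial^* S}V\cdot\nu_S\,d\H\le\H(\partial^* S)=P(S)$. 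Chaining the two estimates yields
\[
h\,|S|\le\int_S\div V\,dx=\int_{\partial^* S}V\cdot\nu_S\,d\H\le P(S),
\]
as desired.

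The hard part is not this chain but the fact that $V$ is smooth only on the \emph{open} set $\Omega$, with no information near $\partial\Omega$, whereas a general competitor $S$ may have its reduced boundary reaching $\partial\Omega$, where $V$ and hence the flux integral are not even defined. To circumvent this I would exhaust $\Omega$ by smooth open sets $\Omega_n$ with $\overline{\Omega_n}\Subset\Omega$ and $\Omega_n\uparrow\Omega$, and apply the step above to the truncated competitors $S\cap\Omega_n$: these are sets of finite perimeter whose reduced boundaries lie in $\overline{\Omega_n}\subseteq\Omega$, so $V$ is genuinely smooth in a neighbourhood of their closures and the divergence theorem applies with no boundary ambiguity, giving $h\,|S\cap\Omega_n|\le P(S\cap\Omega_n)$ for every $n$. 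Letting $n\to\infty$, the areas converge, $|S\cap\Omega_n|\to|S|$, by monotone convergence. The delicate point --- and the one I expect to require real work --- is the control of the perimeters as the truncation sweeps up to $\partial\Omega$: one must show that the cut along $\partial\Omega_n$ does not create spurious perimeter in the limit, i.e.\ that $\limsup_n P(S\cap\Omega_n)\le P(S)$. I would obtain this by choosing the exhaustion to be the superlevel sets $\{x\in\Omega:\mathrm{dist}(x,\partial\Omega)>t\}$ and estimating the length of the slices $S^{(1)}\cap\partial\Omega_n$ through the coarea formula applied to the (Lipschitz) distance function, selecting a good sequence of levels along which the sliced perimeter converges to the portion of $P(S)$ carried by $\partial\Omega$. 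Combining this with the lower semicontinuity of perimeter then gives $P(S\cap\Omega_n)\to P(S)$ and, passing to the limit in $h\,|S\cap\Omega_n|\le P(S\cap\Omega_n)$, the required inequality $h\,|S|\le P(S)$.
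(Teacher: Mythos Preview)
The paper does not actually prove this theorem: it is quoted as a known result from \cite{Grieser_2006}, so there is no ``paper's own proof'' to compare against. Your argument is the standard one and is correct. The chain
\[
h\,|S|\le\int_S\div V\,dx=\int_{\partial^* S}V\cdot\nu_S\,d\H\le P(S)
\]
is exactly the duality Grieser exploits, and your care in handling competitors $S$ whose reduced boundary touches $\partial\Omega$ (via exhaustion by compactly contained $\Omega_n$ and a coarea-slicing argument to control the perimeter of $S\cap\Omega_n$) is more explicit than most presentations bother with. One could shortcut the exhaustion step slightly: since $h(\Omega)=\inf_n h(\Omega_n)$ by monotonicity of the Cheeger constant under inclusion (property~(ii) of Theorem~\ref{genprop}) together with the fact that any fixed competitor eventually sits inside some $\Omega_n$, it suffices to prove $h(\Omega_n)\ge h$ for each $n$, and on $\Omega_n$ the vector field $V$ is smooth up to the boundary, so the Gauss--Green identity applies to every $S\subseteq\Omega_n$ without further truncation. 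But your version is fine as written.
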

\noindent
An example of applicability of this criterion is the above-mentioned result for the annulus, which can be obtained by employing the vector field of~\cite[Sec.~11, Ex.~4]{Bellettini-Caselles-Novaga_2002} (see also Remark~\ref{Rem.test} below, where the corresponding vector field can be found explicitly). However, for a general set $\Omega$ it is not easy at all to find a vector field producing non-trivial lower bounds by this criterion.\par\bigskip

The purpose of the present paper is to introduce a class of non-convex planar domains for which the Cheeger constant and the Cheeger set can be determined explicitly, namely, the curved strips. This class of sets has been intensively studied in the last two decades as an effective configuration space for curved quantum waveguides (see the survey papers~\cite{DE,KK} and the references therein).\par

More precisely, we call ``curved strip'' a tubular neighbourhood of a curve
without boundary in the plane. There are then few possibilities:
a ``curved annulus'', a ``finite curved strip'', an ``infinite curved strip''
or a ``semi-infinite curved strip''
-- see Figure~\ref{figstrip}
(we leave the formal definitions to Section~\ref{GP}).
\begin{figure}[htbp]
\begin{center}
\begin{tabular}{ccc}
\includegraphics[width=0.3\textwidth]{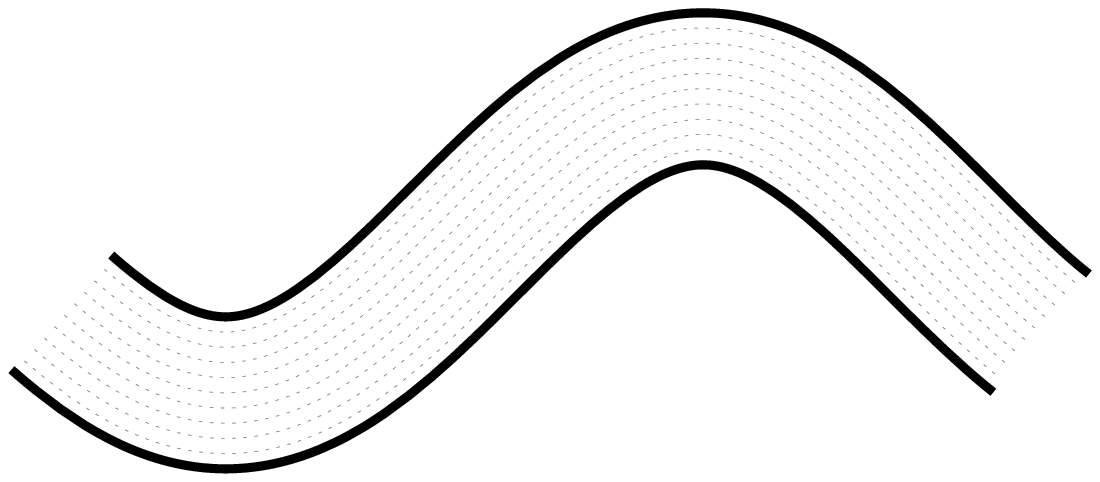}
&& \includegraphics[width=0.3\textwidth]{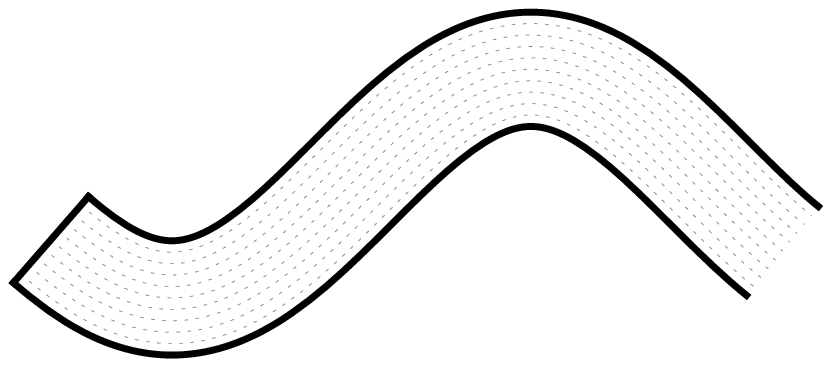}
\\
infinite curved strip
&& semi-infinite curved strip
\\
\includegraphics[width=0.3\textwidth]{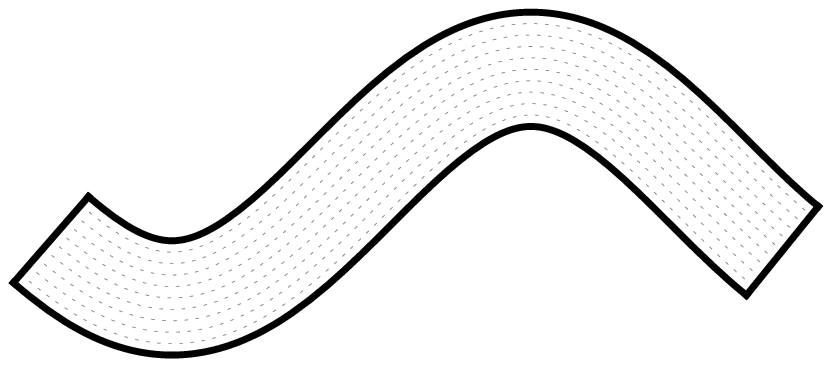}
&& \includegraphics[width=0.3\textwidth]{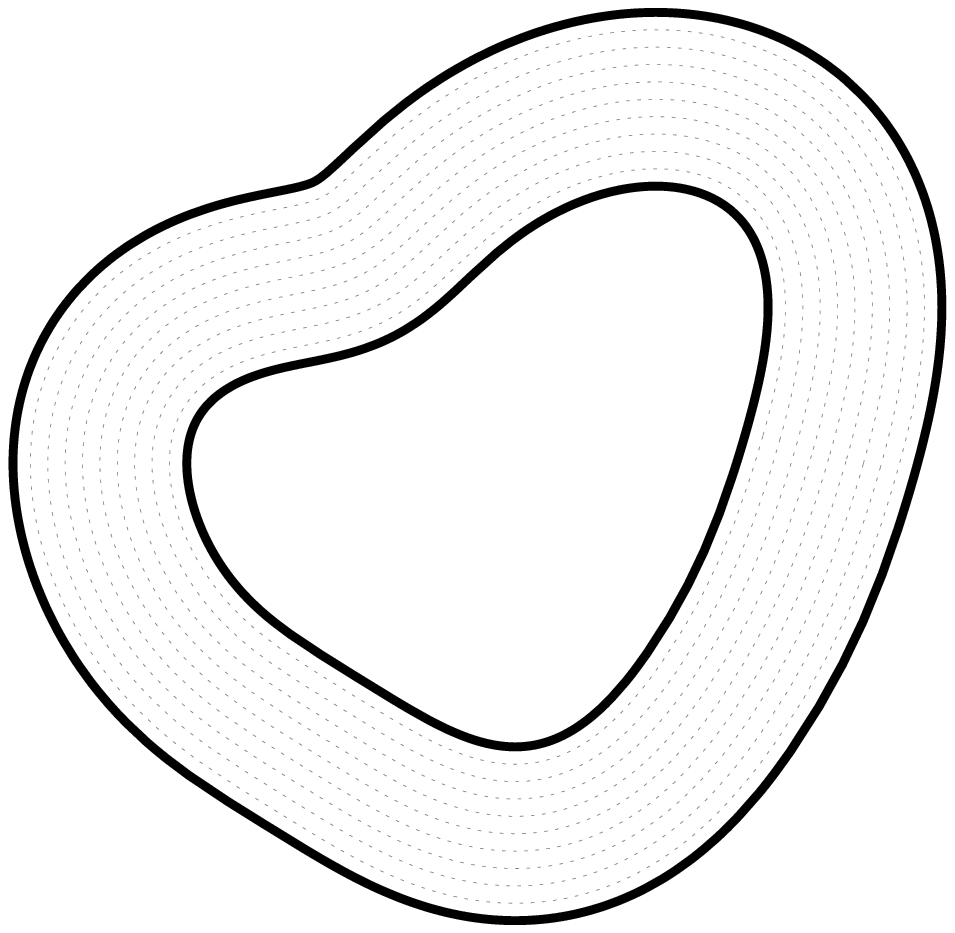}
\\
finite curved strip
&& curved annulus
\end{tabular}
\end{center}
\caption{The four possible types of strips.}
\label{figstrip}
\end{figure}
Our main results, Theorems~\ref{Thm.Cheeger} and~\ref{Thm.bounded},
describe the situation in all these cases.
In particular, for a curved annulus the situation is analogous
to the standard annulus, that is, the strip itself is the unique Cheeger set
and the Cheeger constant only depends on the width of the strip,
irrespectively of the curvature of the curve
-- more precisely, the Cheeger constant is the inverse of the half-width (Theorem~\ref{Thm.Cheeger}, part~(i)). For an infinite or a semi-infinite curved strip, again the Cheeger constant equals the inverse of the half-width of the strip, but there is no Cheeger set (Theorem~\ref{Thm.Cheeger}, part~(ii)). Finally, for a finite curved strip, the situation is analogous to the standard rectangle, that is, there exists a Cheeger set, which is not the whole strip because of the corners, and the Cheeger constant is strictly bigger than the inverse of the half-width. Moreover, in this last case we can also give a (sharp) upper and a lower bound, which only depend on the width and on the length of the strip (Theorem~\ref{Thm.bounded}).\par\bigskip

We conclude this introductory section with a couple of comments. First of all, it is to be mentioned that, in the study of the Cheeger problem, an important role is played by those sets $\Omega$ which are Cheeger sets of themselves. This is what happens in many situations, such as the discs and the annuli and, as we show in the present paper, the ``curved annuli''. Those sets are called \emph{calibrable} and are intensively studied in the image processing literature, see for instance~\cite{Bellettini-Caselles-Novaga_2002}.\par

A second remark has to be done on the connection between the Cheeger constant and the eigenvalue problems. In fact, the \emph{Cheeger inequality} tells that
\begin{equation}\label{Ch.bound}
\lambda_p(\Omega) \geq \left(\frac{h(\Omega)}{p}\right)^p
\end{equation}
for any $p\in(1,\infty)$,
where $\lambda_p(\Omega)$ is the first eigenvalue of the $p$-Laplacian.
Moreover, as shown in~\cite{Kawohl-Fridman_2003},
$h(\Omega)=\lim_{p\searrow 1} \lambda_p(\Omega)$.
At this regards, it is interesting to notice one property of the curved strips.
In fact, it is well known that the first eigenvalue of the Dirichlet Laplacian
(or, more in general, the infimum of the Rayleigh quotient,
in the case of unbounded strips for which there might be no eigenvalues)
for a curved strip strongly depends on its curvature
(see for instance~\cite{DE,EFK,KK}). On the other hand, the Cheeger constant is much less sensitive, since we will show, for instance, that for infinite and semi-infinite curve strips, as well as for curved annuli, the Cheeger constant does not depend at all on the curvature of the strip, but only on its width.

\subsection{The geometrical setting\label{GP}}
In this section we set the notations for the geometrical situation
that we will consider throughout the paper.
Let $\Gamma$ be a ${\rm C}^2$, connected curve in $\R^2$
(i.e., the homeomorphic image of $(0,1)$ or of $\mathbb S^1$ under a ${\rm C^2}$ function),
and let us denote by $|\Gamma| = \int_\Gamma dq$ its length, $dq$ being the arclength element of $\Gamma$. Let also $N:\Gamma\to\R^2$ be a ${\rm C}^1$ vector field giving the normal vector in the points of $\Gamma$, and let $\kappa:\Gamma\to\R$ be the associated curvature (notice that the sign of $\kappa$
depends on the choice of the orientation of $N$).
We recall that to define $\kappa$ it is enough to take a unit-speed parametrization $\gamma$ of $\Gamma$, and hence it is
\begin{equation}\label{frenet}
\kappa(q) = \ddot\gamma\big(\gamma^{-1}(q)\big)\cdot N(q)\,,
\end{equation}
where the dot denotes the standard scalar product in $\R^2$. Now, we introduce a mapping~$\L$ from~$\Gamma\times \R$ to $\R^2$ by
\[
\L(q,t) := q + t \, N(q)\,,
\]
and for any positive~$a$ we introduce the set
\[
\Omega_{\Gamma,a} := \L\big(\Gamma\times(-a,a)\big) \,.
\]
We are interested in the sets $\Omega_{\Gamma,a}$ which are non-self-intersecting tubular neighbourhoods of $\Gamma$.
More precisely, we will always make the assumption that
\begin{equation}\label{Ass.basic}
\hbox{$\L$ is injective in $\Gamma\times [-a,a]$}\,,
\end{equation}
hence the set is as in Figure~\ref{figgeomsetting}. Using the expression for the bilinear form
\begin{equation}\label{metric}
d\L^2 = \big(1-\kappa(q)\,t\big)^2 \, dq^2 + dt^2
\end{equation}
that follows from~(\ref{frenet}),
by the Inverse Function Theorem
we can easily notice that the assumption~(\ref{Ass.basic})
forces $a$ to be small compared to the curvature.
More precisely, (\ref{Ass.basic}) implies that $\big|\kappa(q)\big|\,a \leq 1$
for any $q\in\Gamma$, that the boundary of $\Omega_{\Gamma,a}$ is ${\rm C}^{1,1}$, and that $\L$ is in fact a ${\rm C}^1$ diffeomorphism between $\Gamma\times (-a,a)$ and $\Omega_{\Gamma,a}$.

\begin{figure}[h]
\begin{center}
\includegraphics[width=0.5\textwidth]{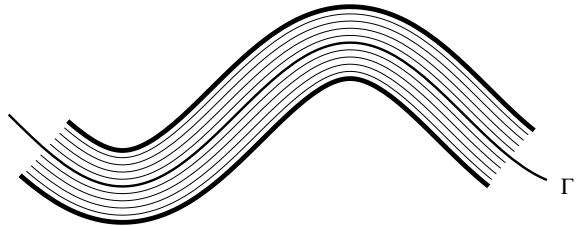}
\end{center}
\caption{The geometry of a curved strip $\Omega_{\Gamma,a}$
and the corresponding curve~$\Gamma$;
the parallel lines correspond to the curves
$s \mapsto \L(s,t)$ with fixed $t\in(-a,a)$.}
\label{figgeomsetting}
\end{figure}

Summing up, under the hypothesis~\eqref{Ass.basic} $\Omega_{\Gamma,a}$ has the geometrical meaning of an open non-self-intersecting strip, contained between the parallel curves $q \mapsto q \pm a \, N(q)$, with $q\in\Gamma$, and it can be identified with the Riemannian manifold $\Gamma\times(-a,a)$ equipped with the metric~\eqref{metric}.\par\medskip

In this paper, we will call \emph{curved strip} any set $\Omega_{\Gamma,a}$
satisfying the assumption~(\ref{Ass.basic}).
Notice that when $\Gamma$ is contained in a line then $\Omega$ reduces to a rectangle,
but the most interesting situation is when $\Gamma$
has a more complicated geometry, since then the associated set is not convex,
hence not covered by the preceding known results for the Cheeger problem.
It is easy to characterize the four possible situations occurring for a curved strip,
to each of which we will associate a name to fix the ideas.
The four kinds of strips are shown in Figure~\ref{figstrip}.
First of all, if the curve $\Gamma$ is not finite,
it may be either infinite or semi-infinite
(that is, not finite but complete,
or not finite and not complete, respectively).
We will call \emph{infinite curved strip} and \emph{semi-infinite curved strip}
the corresponding sets $\Omega_{\Gamma,a}$.
On the other hand, if the curve is finite,
then it can be either compact or not compact
(then homeomorphic to a circle or to an open segment, respectively).
In the first case, we will speak about a \emph{curved annulus},
the annulus corresponding to the case when $\Gamma$ is exactly a circle,
and in the other case about a \emph{finite curved strip}.

\section{The main geometrical results}

In this Section we will give some general technical properties,
which will be used later to show our main results.
First of all, we can easily obtain an upper bound for the curved strips.
In the next result, for a curve $\Gamma$ which is not finite we consider
a unit-speed parametrization
$\gamma:(0,+\infty)\to\R^2$ (respectively, $\gamma:(-\infty,+\infty)\to\R^2$)
if the strip is semi-infinite (respectively, infinite).
Moreover, we will denote by $\Gamma_L$ the subset of $\Gamma$
given by $\gamma(0,L)$ or $\gamma(-L,L)$ for the semi-infinite or infinite case,
respectively.
\begin{Lemma}[Upper bound]\label{Lem.upper}
Let $\Gamma$ be infinite or compact (\emph{i.e.}, $\Omega_{\Gamma,a}$ is a semi-infinite or infinite curved strip, or a curved annulus). Then
\[
h(\Omega_{\Gamma,a}) \leq \frac{1}{a}\,.
\]
In particular, if $\Omega_{\Gamma,a}$ is a curved annulus, then
\[
\frac{P(\Omega_{\Gamma,a})}{|\Omega_{\Gamma,a}|} = \frac{1}{a}\,,
\]
while if $\Omega_{\Gamma,a}$ is a semi-infinite or infinite curved strip, then
\[
\frac{P\big(\Omega_{\Gamma_L,a}\big)}{\big|\Omega_{\Gamma_L,a}\big|} \xrightarrow[L\to\infty]{} \frac{1}{a}\,.
\]
\end{Lemma}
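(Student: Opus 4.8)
The plan is to compute the perimeter and area of $\Omega_{\Gamma,a}$ (or of its truncations $\Omega_{\Gamma_L,a}$) directly from the parametrization $\L$ and the metric~\eqref{metric}, and to use the whole strip itself as the test set $S$ in the definition~\eqref{Ch.constant} of the Cheeger constant. The key ingredient is the area element associated to the metric~\eqref{metric}: since $d\L^2 = (1-\kappa(q)\,t)^2\,dq^2 + dt^2$, the Jacobian of $\L$ equals $|1-\kappa(q)\,t|$, which by the discussion following~\eqref{Ass.basic} equals $1-\kappa(q)\,t$ because $|\kappa(q)|\,a\le 1$. Hence I would write
\[
|\Omega_{\Gamma,a}| = \int_\Gamma \int_{-a}^{a} \big(1-\kappa(q)\,t\big)\,dt\,dq = \int_\Gamma 2a\,dq = 2a\,|\Gamma|\,,
\]
the term linear in $t$ integrating to zero over the symmetric interval $(-a,a)$.

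For the perimeter, the two long sides of the strip are the parallel curves $q\mapsto \L(q,\pm a)$, whose arclength elements are $(1\mp\kappa(q)\,a)\,dq$ by~\eqref{metric}. In the compact case (curved annulus) there are no short ends, so
\[
P(\Omega_{\Gamma,a}) = \int_\Gamma \big(1-\kappa(q)\,a\big)\,dq + \int_\Gamma \big(1+\kappa(q)\,a\big)\,dq = 2\,|\Gamma|\,,
\]
again by cancellation of the curvature terms. Dividing the two displays yields exactly $P/|{\cdot}| = 1/a$, and since the strip is itself an admissible competitor we conclude $h(\Omega_{\Gamma,a})\le 1/a$; this proves both the annulus identity and the inequality at once.

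For the infinite and semi-infinite cases I would apply the same computation to the truncated strip $\Omega_{\Gamma_L,a}$, whose lateral boundary again contributes $2\,|\Gamma_L|$ to the perimeter while the area is $2a\,|\Gamma_L|$. The difference is the two short end caps: each is a segment of length $2a$ (the image of $\{q\}\times(-a,a)$ at the endpoints of $\Gamma_L$), so they add a bounded quantity — at most $4a$ total — to the perimeter. Thus
\[
\frac{P\big(\Omega_{\Gamma_L,a}\big)}{\big|\Omega_{\Gamma_L,a}\big|}
\le \frac{2\,|\Gamma_L| + 4a}{2a\,|\Gamma_L|}
= \frac{1}{a} + \frac{2}{|\Gamma_L|}\,,
\]
and since $|\Gamma_L|\to\infty$ as $L\to\infty$, the ratio converges to $1/a$ from above, giving the stated limit and hence $h(\Omega_{\Gamma,a})\le 1/a$ by taking $\Omega_{\Gamma_L,a}$ as competitors.

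The only genuinely delicate point is justifying that the Jacobian is $1-\kappa(q)\,t$ with a definite sign and that $\L$ restricted to the lateral boundary curves is the honest arclength parametrization giving the factors $1\mp\kappa(q)\,a$; both follow from~\eqref{metric} together with the injectivity assumption~\eqref{Ass.basic} and the consequence $|\kappa|\,a\le1$ already recorded in the excerpt, so no serious obstacle remains beyond bookkeeping. One should also note in passing that the truncation $\Omega_{\Gamma_L,a}$ need not itself be a curved strip, but this is irrelevant: it is simply a bounded set of finite perimeter, hence an admissible test set in~\eqref{Ch.constant}.
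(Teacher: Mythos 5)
Your proposal is correct and follows essentially the same route as the paper: both use the whole strip (for the curved annulus) or the truncations $\Omega_{\Gamma_L,a}$ (for the non-finite cases) as test sets in~\eqref{Ch.constant}, compute the area and perimeter from the metric~\eqref{metric} with the curvature terms cancelling by the symmetry of $(-a,a)$, and obtain the ratio $\frac{4a+2|\Gamma_L|}{2a|\Gamma_L|}\to\frac1a$. The only cosmetic difference is that the paper records the end-cap contribution $4a$ as an exact equality rather than your ``at most $4a$'' bound, which is what strictly yields the stated convergence (not just an upper bound on the ratio), but your own computation already shows this equality.
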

\begin{proof}
If $\Omega_{\Gamma,a}$ is a curved annulus, then we take the whole $S=\Omega_{\Gamma,a}$ as a test domain in~\eqref{Ch.constant}. Recalling~(\ref{metric}), we have then
\[
\frac{P(S)}{|S|}
= \frac{\int_\Gamma (1+\kappa(q)\,a) \, dq + \int_\Gamma (1-\kappa(q)\,a) \, dq} {\int_\Gamma \int_{-a}^a (1-\kappa(q)\,t) \, dt \, dq}
= \frac{2|\Gamma|}{2 a |\Gamma|}
= \frac{1}{a}\,.
\]
Notice that, by the symmetry of the set $S$, the curvature term cancels both in the numerator and in the denominator.\par
On the other hand, if $\Gamma$ is not finite, then the whole strip is not admissible because it has both infinite area and perimeter. However, for any $L>0$, we can consider the finite curved strip $S=\Omega_{\Gamma_L,a}$, which is of course contained in $\Omega_{\Gamma,a}$. Therefore, one can easily evaluate
\begin{equation}\label{approximation}
\frac{P(S)}{|S|}
= \frac{4a + \int_{\Gamma_L} (1+\kappa(q)\,a) \, dq+ \int_{\Gamma_L} (1-\kappa(q)\,a) \, dq}{\int_{\Gamma_L} \int_{-a}^a (1-\kappa(q)\,t) \, dt \, dq}
= \frac{4a + 2|\Gamma_L|}{2 a |\Gamma_L|}
\xrightarrow[L\to\infty]{} \frac{1}{a}\,.
\end{equation}
In the formula for the perimeter, notice the term $4a$ corresponding to the two ``vertical'' parts of $\partial S$ at the start and at the end. Thanks to the definition~(\ref{Ch.constant}), the two above estimates give the thesis.
\end{proof}

The lower bound is much more complicated to obtain. To find it, we will introduce an operation which, in a sense, fills in the ``holes'' and the ``bays'' in the test domains $S$. More precisely, let us take an open set $S\subseteq \Omega_{\Gamma,a}$, and define the set $\Gamma_S$ as
\[
\Gamma_S := \Big\{ q \in \Gamma:\, \L\Big(\{q\}\times (-a,a)\Big)\cap S \neq \emptyset \Big\}
\]
and the functions $f_\pm : \Gamma_S\to [-a,a]$ as
\begin{align*}
f_-(q) := \inf \Big\{ t\in (-a,a):\, (q,t)\in S\Big\}\,, &&
f_+(q) := \sup \Big\{ t\in (-a,a):\, (q,t)\in S\Big\}\,.
\end{align*}
Therefore, $S$ is contained between the two graphs of $f_+$ and $f_-$. Notice now that, if $S$ is connected, then of course so is $\Gamma_S$. In particular, there are two possibilities: either $\Gamma_S$ is a subinterval of $\Gamma$, and in this case we call $q_l$ and $q_r$ its extremes, or $\Gamma_S$ is a closed curve. Observe that if $\Gamma$ is not compact (that is, always except when $\Omega_{\Gamma,a}$ is a curved annulus), then $\Gamma_S$ must necessarily be a subinterval of $\Gamma$; on the other hand, if $\Omega_{\Gamma,a}$ is a curved annulus, then both the situations --that $\Gamma_S$ is a subinterval of $\Gamma$, and that $\Gamma_S$ is a closed curve-- are possible, and in particular $\Gamma_S$ is a closed curve if and only if $\Gamma_S=\Gamma$.

Finally, we can give the following definition.
\begin{definition}\label{defS*}
Let $S$ be an open subset of $\Omega_{\Gamma,a}$ with finite perimeter, and let $\Gamma_S$ and $f_\pm$ be defined as above. We define then
\[
S^* := \Big\{\L(q,t)\in \Omega_{\Gamma,a}:\, q\in\Gamma_S,\, f_-(q)<t<f_+(q) \Big\}\,.
\]
\end{definition}
We can now show the main property of the set $S^*$, which will be fundamental for our purposes.

\begin{Lemma}[Area and perimeter of $S^*$]\label{Lem.technical}
Let $S$ be an open, bounded and connected subset of $\Omega$ of finite perimeter. Then
\begin{align*}
|S^*| \geq |S|\,, && P(S^*) \leq P(S) \,,
\end{align*}
and $f_\pm \in BV\big(\Gamma_S\big)$. Moreover, calling $f_\pm'\,dq$ the absolute continuous part of $Df_\pm$ and $D_s f_\pm$ its singular part, we have the validity of formula
\begin{equation}\label{stimaper}\begin{split}
P(S^*)&=\int_{\Gamma_S}\sqrt{\big(1-\kappa(q)\,f_+(q)\big)^2+f_+'(q)^2}\,dq+\int_{\Gamma_S}\sqrt{\big(1-\kappa(q)\,f_-(q)\big)^2+f_-'(q)^2}\, dq\\
&\hspace{60pt}+\big|D_s f_+\big|(\Gamma_S)+\big|D_s f_-\big|(\Gamma_S)+ \Big(f_+(q_l)-f_-(q_l)\Big)+\Big( f_+(q_r)-f_-(q_r)\Big)\,,
\end{split}
\end{equation}
where if $\Gamma_S$ is a subinterval of $\Gamma$ we denote by $q_l$ and $q_r$ its extremes, while if $\Gamma_S$ is compact the term $\Big(f_+(q_l)-f_-(q_l)\Big)+\Big( f_+(q_r)-f_-(q_r)\Big)$ has to be intended as $0$.
\end{Lemma}
\begin{proof}
First of all, the fact that $|S^*|\geq |S|$ is obvious, since by definition $S^* \supseteq S$. Concerning the inequality for the perimeter, we start by noticing that, by standard arguments, it is admissible to assume that $S$ is smooth. In fact, by the Compactness Theorem for BV functions (see for instance~\cite{AFP}), we can take a sequence $S_j$ of smooth sets converging in the $L^1$ sense to $S$ in such a way that $P(S_j)\to P(S)$. By definition, the corresponding sets $S^*_j$ converge to $S^*$, and by the lower semicontinuity of the perimeter this yields $P(S^*)\leq \liminf P(S^*_j)$. As an immediate consequence, once we establish the validity of this lemma for smooth sets, it will directly follow also in full generality.\par

The inequality $P(S^*)\leq P(S)$ for smooth sets is very easy to guess, but a bit boring to prove. For simplicity, we will divide the proof in some steps.
\step{I}{Non-intersecting curves cannot pass ``from above to below''.}
In this first step, we underline the following very easy topological fact. Here, by $\pi_1:\R^2\to \R$ we denote the first projection.
\begin{equation}\label{stepIold}\begin{array}{c}
\hbox{\emph{Let $q_0\in\R$, let $\gamma_1,\,\gamma_2\subseteq\R^2$ be two non-intersecting continuous curves in the plane such}}\\
\hbox{\emph{that $\min \pi_1\gamma_1=\min \pi_1\gamma_2 = q_0$. If $t_1:=\max\{t: (q_0,t)\in \gamma_1\}>\max\{t: (q_0,t)\in \gamma_2\}=:t_2$,}}\\
\hbox{\emph{then for all $q\in \pi_1\gamma_1\cap\pi_1\gamma_2$ one has $\max\{t: (q,t)\in \gamma_1\}>\max\{t: (q,t)\in \gamma_2\}$.}}
\end{array}\end{equation}
The meaning of this claim is very simple: if one has two continuous and non-intersecting curves in the plane, and the least abscissa of points in the two curves coincide (otherwise, it is obvious that the claim is false), then the curve which starts above always remains above.\par
To show the validity of the claim, suppose it is not true, and let $\bar q\in \pi_1\gamma_1\cap\pi_1\gamma_2$ be a point for which
\[
\bar t_1:= \max\{t: (\bar q,t)\in \gamma_1\}<\max\{t: (\bar q,t)\in \gamma_2\} =: \bar t_2
\]
(notice that the equality can not hold true, since the curves do not intersect).
Figure~\ref{Fig.mixed}(a) shows the situation.

\begin{figure}[h!]
\begin{center}
\begin{tabular}{ccc}
\includegraphics[width=0.47\textwidth]{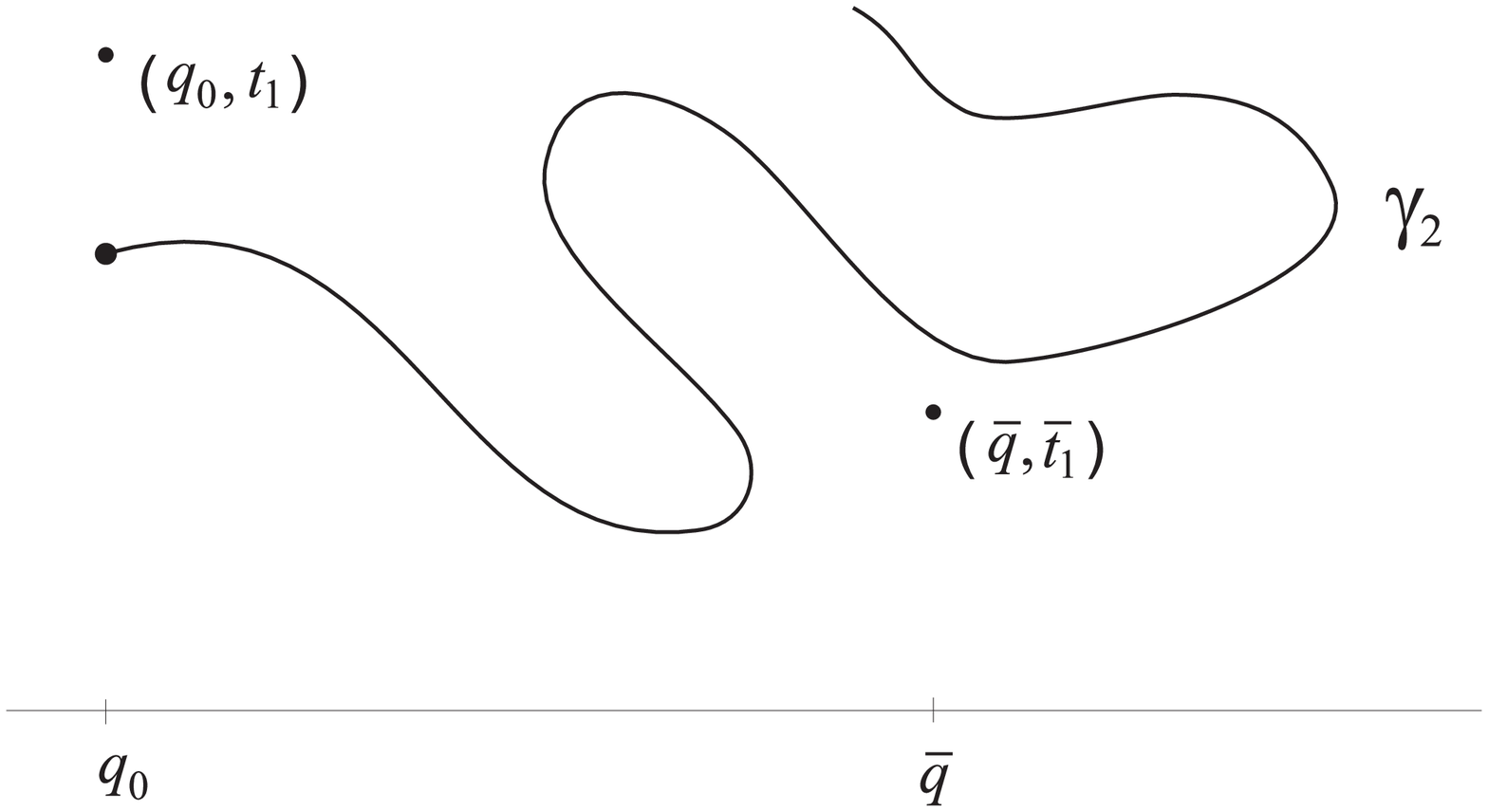}
& \qquad\quad
&\includegraphics[width=0.4\textwidth]{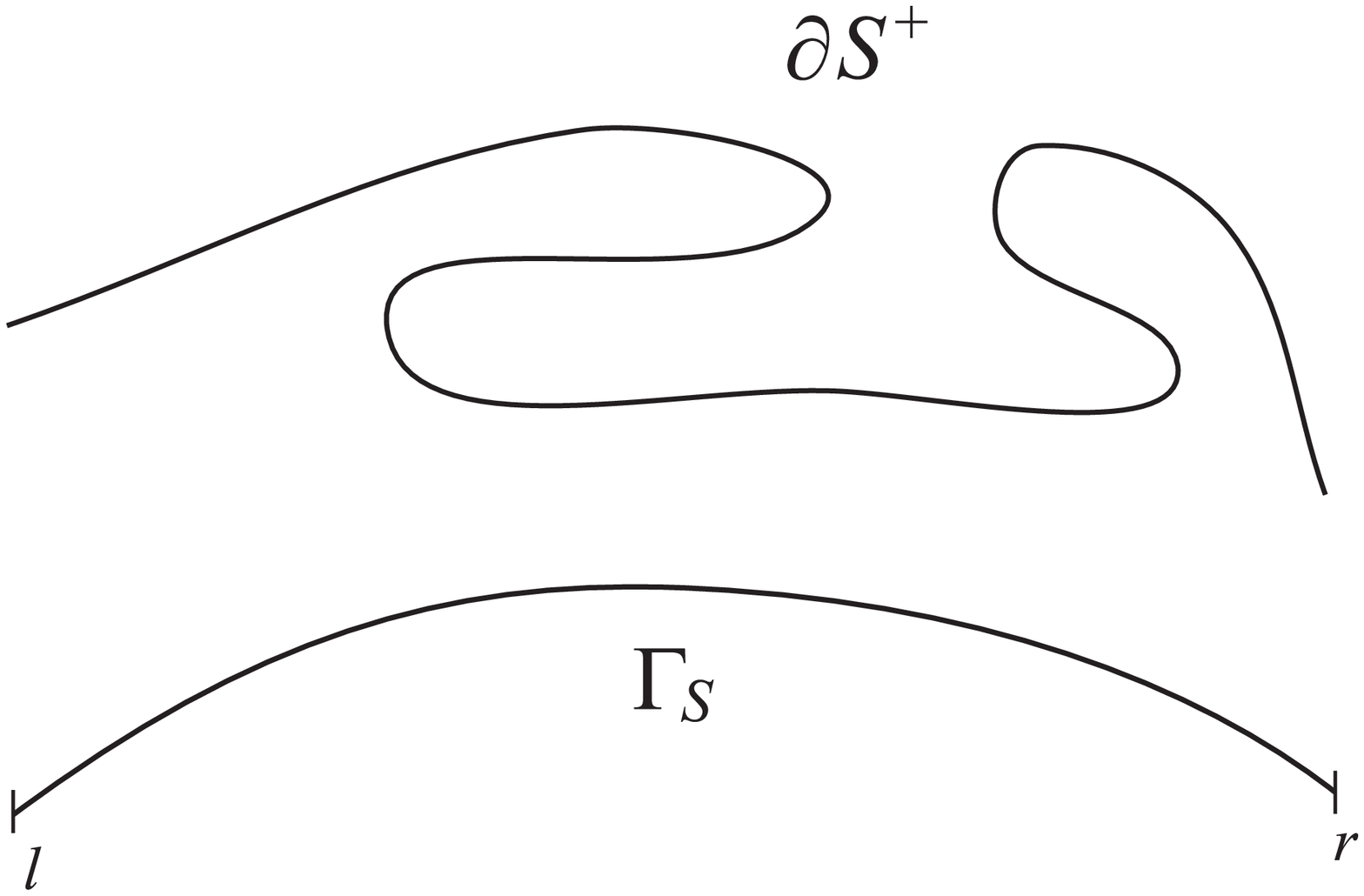}
\medskip \\
(a) The situation of Step~I.
&&
(b) A possible $\partial S^+$ in Step~III.
\end{tabular}
\end{center}
\caption{Figures clarifying some steps of the proof of Lemma~\ref{Lem.technical}.}
\label{Fig.mixed}
\end{figure}

\noindent
The curve $\gamma_1$, then, is contained by definition in
\[
A := \Big\{ (q,t) \in \R^2\setminus \gamma_2:\, q\geq q_0,\, \Big\} \setminus \Big\{ (\bar q, t) \in \R^2 :\, t > \bar t_1 \Big\} \,.
\]
This is a contradiction with the continuity of the curve $\gamma_1$, since the points $\big(q_0,t_1\big)$ and $\big(\bar q, \bar t_1 \big)$ are in $\gamma_1$ but belong to two distinct connected components of $A$. Therefore, the claim~(\ref{stepIold}) is proved.
\step{II}{First properties and some definitions.}
We can immediately observe some simple properties of $\partial S$ and give some related definitions. First of all, since $S$ is smooth, then $\partial S$ is the union of finitely many closed curves $\gamma_i,\, 1\leq i \leq N$; in particular, there is exactly one of them, say $\gamma_1$, which encloses the whole $S$.\par
Let us then consider separately the two cases whether or not $\Gamma_S$ is compact. If $\Gamma_S$ is not compact, thus it is an interval $(q_l,q_r)$, then it is immediate to observe that for every $q\in \Gamma_S$ the points $\L\big(q,f_\pm(q)\big)$ belong to $\gamma_1$. Let us then call $\partial S^+$ the part of $\gamma_1$ starting from $\L\big(l,f_+(l)\big)$, ending at $\L\big(r,f_+(r)\big)$ and containing $\L\big(q,f_+(q)\big)$ for every $q\in \Gamma_S$; similarly, we denote by $\partial S^-$ the part of $\gamma_1$ starting from $\L\big(r,f_-(r)\big)$, ending at $\L\big(l,f_-(l)\big)$ and containing $\L\big(q,f_-(q)\big)$ for every $q\in \Gamma_S$. An easy geometric argument ensures that $\partial S^+$ and $\partial S^-$
are well defined and do not intersect each other. We then obtain
\begin{equation}\label{4S}
\partial S \supseteq \gamma_1 = \partial S^+ \cup \partial S^r \cup \partial S^- \cup \partial S^l\,,
\end{equation}
being $\partial S^r$ (resp. $\partial S^l$) the part of $\gamma\setminus\big(\partial S^+ \cup \partial S^-\big)$ connecting $\L\big(r,f_+(r)\big)$ and $\L\big(r,f_-(r)\big)$ (resp. $\L\big(l,f_-(l)\big)$ and $\L\big(l,f_+(l)\big)$).\par

Consider now the case when $\Gamma_S$ is compact (thus, $\Gamma_S=\Gamma$). In this case, we can directly call $\partial S^+ = \gamma_1$, and again it is easy to observe that for every $q\in\Gamma$ one has $\L\big(q,f_+(q)\big) \in \partial S^+$. On the other hand, all the points $\L\big(q, f_-(q)\big)$ belong to a same connected component of $\partial S$ different from $\gamma_1$, say $\gamma_2$. We call then $\partial S^-=\gamma_2$ and $\partial S^r=\partial S^l = \emptyset$, so that also in this case~(\ref{4S}) holds true.\par

We conclude this step noticing that for the set $S^*$ the inclusion~(\ref{4S}) is in fact an equality by construction.

\step{III}{The ``upper boundary'' is well-ordered.}
In this step we show that the curve $\partial S^+$ reaches all the points $\L(q,f_+(q))$ in the ``correct order''.
This means that, if we parametrize $\partial S^+$ as $\gamma([0,1])$
with $\gamma(0)=\L(l,f_+(l))$ and $\gamma(1)=\L(r,f_+(r))$, then
\begin{equation}\label{stepIIIold}\begin{array}{c}
\hbox{\emph{If $\gamma(\sigma_1)=\L\big(q_1,f_+(q_1)\big)$ and $\gamma(\sigma_2)=\L\big(q_2,f_+(q_2)\big)$, one has $\sigma_1< \sigma_2 \Longleftrightarrow q_1<q_2$.}}
\end{array}\end{equation}
Notice that this fact is not trivial,
since the curve $\partial S^+$ does not have to be a graph on $\Gamma_S$,
hence it can, sometimes, move towards left, as in Figure~\ref{Fig.mixed}(b).
However, the figure itself suggests that the points $(q,f_+(q))$ are in any case reached ``from left to right''. Let us now show~(\ref{stepIIIold}). To do so, suppose by contradiction that it is not true. Hence, there exist $\sigma_1, \, \sigma_2,\, q_1$ and $q_2$ in such a way that $\gamma(\sigma_i)=\L(q_i,f_+(q_i))$ for $i=1,2$ but one has $\sigma_1>\sigma_2$ and $q_1<q_2$.
We can then give the following definitions, being $\pi$ the projection from $\Omega$ to $\Gamma$.
\[\begin{split}
\sigma_3 &= \min \Big\{ \sigma \in (\sigma_1,1): \pi(\gamma(\sigma)) = q_2\Big\} \,, \\
q^* &= \min \Big\{ \pi(\gamma(\sigma)): \sigma\in (\sigma_1,\sigma_3)\Big\} \,, \\
\sigma_0 &= \max \Big\{ \sigma\in (0,\sigma_2): \pi(\gamma(\sigma))=q^*\Big\}\,.
\end{split}\]
Notice that by construction one has $0<\sigma_0<\sigma_2<\sigma_1<\sigma_3<1$, as well as $q^*\leq q_1<q_2$. Now, consider the two curves $\gamma_1=\L^{-1}\big(\gamma_{|[\sigma_0,\sigma_2]}\big)$ and $\gamma_2=\L^{-1}\big(\gamma_{|[\sigma_1,\sigma_3]}\big)$, which are continuous and non-intersecting. Moreover, $\min\pi_1\gamma_1= \min \pi_1 \gamma_2 = q^*$, hence we can apply Step~I to derive that $\gamma_1$ is either ``always above'' or ``always below'' $\gamma_2$, in the sense of~(\ref{stepIold}). By checking $q=q_1$, one observes that $\gamma_1$ is below $\gamma_2$, since $\max\{\sigma: (q_1,\sigma)\in \gamma_2\}=f_+(q_1)$ is surely greater than $\max\{\sigma: (q_1,\sigma)\in \gamma_1\}$, by definition of $f_+$. On the other hand, by checking $q=q_2$, the very same reason shows that $\gamma_1$ is above $\gamma_2$, being $\max\{\sigma: (q_2,\sigma)\in \gamma_1\}=f_+(q_2)$. The contradiction shows the validity of~(\ref{stepIIIold}), hence this step is concluded.
\step{IV}{The functions $f_\pm$ are in $BV(\Gamma_S)$.}
Let us fix an arbitrary $N\in\N$, and an arbitrary sequence $l=q_0<q_1< \cdots < q_N <q_{N+1} = r$ in $\Gamma_S$. We claim that
\begin{equation}\label{stepIVold}
\sum_{i=0}^N \big| f_+(q_i) - f_+(q_{i+1}) \big| \leq \H^1\big( \partial S^+ \big)\,,
\end{equation}
being $\H^1$ the Hausdorff measure of dimension $1$, that is, the length. Notice that this inequality would show that $f_+\in BV(\Gamma_S)$, since $S$ is of finite perimeter.\par
To show the estimate, let us call $\gamma_i$ the part of the curve $\partial S^+$ which connects $\L(q_i,f_+(q_i))$ with $\L(q_{i+1},f_+(q_{i+1}))$. By the preceding steps, we know that $\partial S^+$ consists of the disjoint union of the curves $\gamma_i$, so that
\[
\H^1 \big( \partial S^+ \big) = \sum_{i=0}^N \H^1 \big(\gamma_i\big)\,.
\]
Hence, (\ref{stepIVold}) will follow at once as soon as we observe that for any $i=0, \,\dots\,, \, N$ one has
\begin{equation}\label{showthenfollows}
\H^1\big(\gamma_i\big) \geq \Big| \L\big( q_i,f_+(q_i)\big)-\L\big( q_{i+1},f_+(q_{i+1})\big) \Big| > \big| f_+(q_i) - f_+(q_{i+1}) \big|\,.
\end{equation}
The first inequality is trivial, since it just says that the length of the curve $\gamma_i$ is greater than the distance of its extreme points. Concerning the strict inequality, instead, let us denote for brevity
\begin{align*}
P:= \L \big(q_i, f_+(q_i) &\big)\,, \qquad
Q:= \L \big(q_{i+1}, f_+(q_{i+1}) \big)\,, \qquad
Q':= \L \big(q_i, f_+(q_{i+1}) \big)\,, \\
&S':= \L\big(q_i,0 \big)\,, \qquad\qquad\qquad
S:= \L\big(q_{i+1},0\big)\,.
\end{align*}
Hence, assuming that $f_+(q_i)\geq f_+(q_{i+1})\geq 0$ (it is then trivial to modify the argument to cover the other cases), one has
\[
\overline{PQ'}+\overline{Q'S'} = \overline{PS'} < \overline{PS} < \overline{PQ} + \overline{QS} = \overline{PQ} + \overline{Q'S'}\,,
\]
where the first inequality is due to the fact that, by definition, $S'$ is the closest point to $P$ inside $\Gamma$. The inequality above says that $\overline{PQ'} < \overline{PQ}$, which is precisely the missing inequality in~(\ref{showthenfollows}). As explained above, this implies the validity of~(\ref{stepIVold}), hence the fact that $f_+\in BV(\Gamma_S)$.\par
Of course, the very same argument shows that also $f_-\in BV(\Gamma_S)$.
\step{V}{One has $\H^1\big(\partial S^+\big)\geq \H^1\big(\partial {S^*}^+\big)$.}
Let us define $\{q_i,\, i\in \N\}\subseteq \Gamma_S$ the jump points of $f_+$, which are countably many since $f_+\in BV(\Gamma_S)$. For any $i$, moreover, let us call
\begin{align*}
f_+^l(q_i) = \lim_{q\uparrow q_i} f_+(q)\,, &&
f_+^r(q_i) = \lim_{q\downarrow q_i} f_+(q)\,:
\end{align*}
being $f_+\in BV(\Gamma_S)$, these two limits exist and correspond to the $\liminf$ and the $\limsup$ of $f_+$ for $q\to q_i$. In particular, one has that
\[
\partial \big( {S^*}^+\big) = \Big\{ \L\big(q,f_+(q)\big):\, q\in \Gamma_S \Big\} \cup \bigcup_{i\in\N} J_i\,,
\]
where $J_i$ is the segment joining $\L\big(q_i,f_+^l(q_i)\big)$ and $\L\big(q_i,f_+^r(q_i)\big)$. Let us fix now $\eps>0$, so that there exists $N\in \N$ such that
\[
\sum_{i> N} \big| J_i \big| < \eps\,.
\]
For simplicity, we can assume that the points $q_i$ are ordered so that $l < q_1 < \cdots < q_n < r$. We can now pick, for any $1\leq i\leq N$, two points $q_i^l<q_i < q_i^r$ in $\Gamma_S$ in such a way that
\begin{itemize}
\item the different intervals $(q_i^l,q_i^r)$ are disjoint;
\item for any $i$ one has
\[
\big|f_+(q_i^l) - f_+^l(q_i) \big| + \big|f_+(q_i^r) - f_+^r(q_i) \big| \leq \frac \eps N\, ;
\]
\item one has
\[
\H^1\Big( \big( \partial {S^*}^+\big) \cap \L \big( (q_i^l,q_i^r) \times (-a,a)\big)\Big)
\leq \big| J_i \big| + \frac \eps N
= \big| f_+^l(q_i) - f_+^r(q_i) \big| + \frac \eps N\,.
\]
\end{itemize}
Now, we can consider the ``bad'' intervals $B_i = (q_i^l, q_i^r)$, where there are high jumps, and the ``good'' intervals $G_i= (q_i^r, q^l_{i+1})$, where there are not. Define also $G_0=(l,q_1^l)$, while $G_N=(q_N^r,r)$. Therefore, we have decomposed $\Gamma_S=\cup_{i\leq N} B_i \cup G_i$. For any good interval $G_i$, one has
\[
\partial {S^*}^+ \cap \L \big( G_i \times (-a,a) \big) = \Big\{ \L\big(q,f_+(q)\big):\, q\in G_i \Big\} \cup \bigcup_{j\in\N} \widetilde J_{i,j}\,,
\]
where $\widetilde J_{i,j}$ are the jumps of $f_+$ contained in the interval $G_i$. Of course all the jumps $\widetilde J_{i,j}$, varying $0\leq i\leq N$ and $j\in\N$, correspond to different jumps $J_i$ for $i>N$. For any bad interval $B_i$, moreover, call $\gamma_i$ the part of the curve $\partial S^+$ from $\L\big(q_i^l,f_+(q_i^l)\big)$ to $\L\big(q_i^r,f_+(q_i^r)\big)$. Thanks to Step~III, all the curves $\gamma_i$ are disjoint, and in particular $\L(q,f_+(q))$ belongs to $\gamma_i$ if and only if $q\in B_i$. Since we know that $\L(q,f_+(q))\in \partial S^+$ for all $q\in\Gamma_S$, this implies that
\[
\H^1\big( \partial S^+\big) \geq \H^1\Big(\big\{\L(q,f_+(q)):\, q\in \bigcup\nolimits_{i=0}^N G_i \big\} \Big) +\sum_{i=1}^N \H^1\big(\gamma_i\big)\,.
\]
Notice also that, as shown with~(\ref{showthenfollows}) in Step~IV, one has for each $1\leq i\leq N$ that
\[
\H^1\big( \gamma_i \big) > \big| f_+(q^l_i) - f_+(q^r_i)\big|\,.
\]
So, we can finally conclude, using all the properties listed above, that
\[\begin{split}
\H^1\big( \partial {S^*}^+\big) & =
\sum_{i=0}^N \H^1 \Big( \partial {S^*}^+\cap\L \big( G_i \times (-a,a)\big) \Big)+\sum_{i=1}^N \H^1 \Big( \partial {S^*}^+\cap\L \big( B_i \times (-a,a)\big) \Big)\\
&\leq\sum_{i=0}^N \bigg(\H^1 \Big(\big\{\L\big(q,f_+(q)\big):\, q\in G_i\big\}\Big)+\sum_{j\in\N} \big| \widetilde J_{i,j}\big|\bigg)
+\sum_{i=1}^N \bigg( \big| f_+^l(q_i) - f_+^r(q_i) \big| + \frac \eps N\bigg)\\
&\leq  \H^1 \Big(\big\{\L\big(q,f_+(q)\big):\, q\in \bigcup\nolimits_{i=0}^N G_i\big\}\Big) + \sum_{i>N} \big| J_i\big|
+\sum_{i=1}^N \bigg( \big| f_+(q^l_i) - f_+(q^r_i) \big| + 2\,\frac \eps N\bigg)\\
&\leq  \H^1 \Big(\big\{\L\big(q,f_+(q)\big):\, q\in \bigcup\nolimits_{i=0}^N G_i\big\}\Big) + \eps
+\sum_{i=1}^N \bigg( \H^1\big( \gamma_i\big) + 2\,\frac \eps N\bigg)\\
&\leq \H^1\big( \partial S^+\big) + 3 \eps\,.
\end{split}\]
Since $\eps>0$ was arbitrary, this step is concluded.
\step{VI}{Conclusion.}
By~(\ref{4S}), we know that
\[
\partial S \supseteq \partial S^+ \cup \partial S^- \cup \partial S^l \cup \partial S^r\,,
\]
and the union is disjoint. On the other hand, as noticed at the end of Step~II, we have
\[
\partial S^* = \partial {S^*}^+ \cup \partial {S^*}^- \cup \partial {S^*}^l \cup \partial {S^*}^r\,.
\]
By Step~V we know that $\H^1\big(\partial S^+\big)\geq \H^1\big(\partial {S^*}^+\big)$, and in the very same way of course $\H^1\big(\partial S^-\big)\geq \H^1\big(\partial {S^*}^-\big)$. Let us then focus for a moment on $\partial S^l$ and on $\partial {S^*}^l$. If $\Gamma_S$ is compact, they are both empty. Otherwise, $\partial S^l$ is a curve between $\L(l,f_-(l))$ and $\L(l,f_+(l))$, while $\partial {S^*}^l$ is the segment joining the same points. As a consequence, in any case one has $\H^1\big(\partial S^l\big)\geq \H^1\big(\partial {S^*}^l\big)$. Of course, similarly $\H^1\big(\partial S^r\big)\geq \H^1\big(\partial {S^*}^r\big)$. Adding up the four inequalities, we finally get that $\H^1\big(\partial S\big) \geq \H^1\big(\partial S^*\big)$.\par
Concerning formula~(\ref{stimaper}), it is immediate to obtain it for smooth functions $f_-$ and $f_+$, while the generalization for $BV$ functions is standard.
\end{proof}
With the above result at hand, it will be quite easy to obtain the lower bound.
\begin{Lemma}[Lower bound]\label{Lem.lower}
For a curved strip $\Omega_{\Gamma,a}$ of any kind, one has
\[
h(\Omega_{\Gamma,a}) \geq \frac{1}{a}\,.
\]
Moreover, if the inequality above is an equality and there is a Cheeger set, then this Cheeger set must be $\Omega_{\Gamma,a}$ itself.
\end{Lemma}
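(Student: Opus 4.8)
The plan is to deduce everything from the symmetrization Lemma~\ref{Lem.technical}, so that the real work collapses to a one-variable pointwise inequality. First I would reduce to \emph{bounded, connected} test sets. If $S\subseteq\Omega_{\Gamma,a}$ has finite perimeter, decomposing $S$ into its indecomposable components $S_i$ gives $P(S)/|S|=\big(\sum_i P(S_i)\big)\big/\big(\sum_i|S_i|\big)\geq \min_i P(S_i)/|S_i|$, so it suffices to bound $P(S)/|S|$ from below for connected $S$; and by a standard truncation (intersecting with large balls $B_R$, along a sequence $R_k\to\infty$ for which the slice measure $\H^1(S\cap\partial B_R)$ tends to $0$) one may further assume $S$ bounded, and open after taking a good representative. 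For such an $S$ the smooth approximation built into Lemma~\ref{Lem.technical} applies, and that lemma yields $P(S^*)/|S^*|\leq P(S)/|S|$, so it is enough to prove $P(S^*)\geq \frac1a|S^*|$.

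Next I would compute both sides over $\Gamma_S$ using the metric~\eqref{metric}. Since the area element is $(1-\kappa(q)\,t)\,dq\,dt$, one has
\[
|S^*|=\int_{\Gamma_S}\Big[(f_+-f_-)-\tfrac{\kappa}{2}(f_+^2-f_-^2)\Big]\,dq\,,
\]
while, dropping from~\eqref{stimaper} the nonnegative singular and endpoint contributions and using $f_\pm'^2\geq0$,
\[
P(S^*)\geq\int_{\Gamma_S}\big[(1-\kappa f_+)+(1-\kappa f_-)\big]\,dq\,.
\]
Here each integrand is nonnegative, because $|f_\pm|\leq a$ and $|\kappa|\,a\leq 1$ force $1-\kappa f_\pm\geq0$; this is also what makes $\sqrt{(1-\kappa f_\pm)^2+f_\pm'^2}\geq 1-\kappa f_\pm$ legitimate. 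Writing $s:=f_++f_-$ and $d:=f_+-f_-\in[0,2a]$, the required inequality $P(S^*)\geq\frac1a|S^*|$ reduces, after multiplication by $a$, to a pointwise inequality that follows from the identity
\[
a\big[(1-\kappa f_+)+(1-\kappa f_-)\big]-\Big[(f_+-f_-)-\tfrac{\kappa}{2}(f_+^2-f_-^2)\Big]=\Big(a-\tfrac{d}{2}\Big)\,(2-\kappa s)\geq0\,,
\]
since $a-\tfrac{d}{2}\geq0$ and $2-\kappa s=(1-\kappa f_+)+(1-\kappa f_-)\geq0$. Integrating over $\Gamma_S$ gives $P(S^*)\geq\frac1a|S^*|$, hence $h(\Omega_{\Gamma,a})\geq 1/a$.

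For the rigidity statement, suppose $h(\Omega_{\Gamma,a})=1/a$ and let $\C$ be a Cheeger set, which by property~(vi) of Theorem~\ref{genprop} may be taken connected. Then the chain $\tfrac1a=P(\C)/|\C|\geq P(\C^*)/|\C^*|\geq\tfrac1a$ consists of equalities; in particular $|\C^*|=|\C|$, so $\C=\C^*$ (as $\C\subseteq\C^*$), and every inequality above is saturated. Saturation of $\sqrt{(1-\kappa f_\pm)^2+f_\pm'^2}\geq 1-\kappa f_\pm$ forces $f_\pm'=0$ a.e., and the vanishing of $|D_s f_\pm|(\Gamma_\C)$ rules out jumps, so $f_+$ and $f_-$ are constant on the connected set $\Gamma_\C$. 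Vanishing of the endpoint terms $f_+(q_i)-f_-(q_i)$ would then force $f_+\equiv f_-$, i.e.\ an empty set; hence $\Gamma_\C$ has no endpoints, meaning $\Gamma_\C=\Gamma$ is a closed curve and $\Omega_{\Gamma,a}$ is a curved annulus. Finally, saturation of the pointwise identity, $(a-\tfrac d2)(2-\kappa s)=0$ with $f_\pm$ constant, excludes the degenerate branch $2-\kappa s=0$ (which would give $f_+=f_-=1/\kappa$, again empty) and forces $d=2a$, that is $f_+\equiv a$ and $f_-\equiv -a$. Therefore $\C=\C^*=\Omega_{\Gamma,a}$.

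The genuinely delicate points are organizational rather than computational. One is the reduction to bounded connected sets, needed so that Lemma~\ref{Lem.technical} can be applied to the unbounded strips. The other, which I expect to be the main obstacle, is the equality analysis: one must track precisely that the endpoint terms of~\eqref{stimaper} are present exactly when $\Gamma_\C$ is a proper arc (thereby excluding all cases except the curved annulus), and one must eliminate the degenerate branch $2-\kappa s=0$. The pointwise inequality itself is immediate once one observes the factorization $2-\kappa s=(1-\kappa f_+)+(1-\kappa f_-)$, which makes the common factor cancel between area and perimeter.
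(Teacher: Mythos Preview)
Your proof is correct and follows the same overall strategy as the paper: reduce to connected bounded competitors, pass to the symmetrized set $S^*$ via Lemma~\ref{Lem.technical}, drop the derivative, singular and endpoint contributions from~\eqref{stimaper}, and compare the remaining integrals. The one genuine difference is in the final algebraic step. The paper introduces the global quantities $t_\pm:=\sup f_+,\ \inf f_-$ and bounds $|S^*|\leq (t_+-t_-)\int_{\Gamma_S}(1-\kappa\,\tfrac{f_++f_-}{2})\,dq$, whence $P(S^*)/|S^*|\geq 2/(t_+-t_-)\geq 1/a$; rigidity then comes from $t_+-t_-=2a$ together with $f_\pm$ constant. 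You instead establish the pointwise factorization
\[
a\big[(1-\kappa f_+)+(1-\kappa f_-)\big]-\Big[(f_+-f_-)-\tfrac{\kappa}{2}(f_+^2-f_-^2)\Big]=\Big(a-\tfrac{d}{2}\Big)(2-\kappa s),
\]
which yields $aP(S^*)\geq|S^*|$ by integration and makes the equality case transparent (both factors identified). Your route is a touch more elegant and gives a cleaner rigidity analysis---in particular, you make explicit why the endpoint terms force $\Gamma_\C=\Gamma$ to be a closed curve, something the paper leaves implicit. The paper's $t_\pm$ device, on the other hand, is marginally shorter since it avoids verifying an algebraic identity. Your reduction to bounded connected sets is also more carefully stated than the paper's, which simply starts from ``any open connected set of finite perimeter'' without justification.
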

\begin{proof}
Let $S$ be any open connected set of finite perimeter in $\Omega_{\Gamma,a}$, and let $S^*$ be as in Definition~\ref{defS*}. Denoting by
\begin{align*}
t_-:= \inf  \big\{ f_-(q) :\, q\in\Gamma_S \big\}\,, &&
t_+:= \sup  \big\{ f_+(q):\, q\in\Gamma_S \big\}\,,
\end{align*}
we can easily estimate
\begin{equation}\label{exp}\begin{split}
|S^*| & = \int_\Gamma \int_{f_-(q)}^{f_+(q)} \big(1-\kappa(q)\,t\big) \, dt \, dq= \int_\Gamma \big(f_+(q)-f_-(q)\big)
\left(1-\kappa(q)\,\frac{f_+(q)+f_-(q)}{2}\right) dq\\
& \leq (t_+-t_-) \int_\Gamma \bigg(1-\kappa(q)\,\frac{f_+(q)+f_-(q)}{2}\bigg)\, dq \,.
\end{split}\end{equation}
On the other hand, by~(\ref{stimaper}) it is easy to estimate the perimeter of $S^*$ as
\[\begin{split}
P(S^*)&=\int_{\Gamma_S}\sqrt{\big(1-\kappa(q)\,f_+(q)\big)^2+f_+'(q)^2}\, dq+\int_{\Gamma_S}\sqrt{\big(1-\kappa(q)\,f_-(q)\big)^2+f_-'(q)^2}\,dq\\
&\hspace{60pt}+ \big|D_s f_+\big|(\Gamma_S)+\big|D_s f_-\big|(\Gamma_S)+ \Big( f_+(q_0)-f_-(q_0)\Big)+\Big( f_+(q_1)-f_-(q_1)\Big)\,,\\
&\geq 2 \int_{\Gamma_S} \bigg(1-\kappa(q)\,\frac{f_+(q)+f_-(q)}{2}\bigg)\,dq\,,
\end{split}\]
simply by neglecting both the absolutely continuous and the singular part of $Df$. Hence, thanks to Lemma~\ref{Lem.technical} we can readily deduce that
\[
\frac{P(S)}{|S|} \geq \frac{P(S^*)}{|S^*|} \geq \frac{2}{\ t_+-t_-} \geq \frac{1}{a} \,,
\]
where the last inequality is due to the trivial bounds $-a \leq t_- < t_+ \leq a$. Finally, if $h(\Omega_{\Gamma,a})=1/a$ and there is some Cheeger set $\C=\C_{\Omega_{\Gamma,a}}$, then all the preceding inequalities must be equalities for $S=\C$, from which it immediately follows that $f_+$ and $f_-$ are constant, and that $t_\pm = \pm a$, thus $\C=\Omega_{\Gamma,a}$.
\end{proof}

\begin{Remark}{\rm
As a consequence of~\eqref{Ch.bound} for~$p=2$, from the above result we get the lower bound
\[
\lambda_2(\Omega_{\Gamma,a}) \geq \frac{1}{4a^2} \,,
\]
which is in fact weaker that the bound
\[
\lambda_2(\Omega_{\Gamma,a}) \geq \frac{j_{0,1}^2}{4a^2}
\]
known from~\cite{EFK}. Here $j_{0,1} \approx 2.4$ denotes the first positive zero of the Bessel function~$J_0$. In fact, even a better bound, reflecting the local geometry of~$\Gamma$ and valid in arbitrary dimensions, is established in~\cite{EFK}.}
\end{Remark}

\begin{Remark}\label{Rem.test}{\rm
It is possible to establish the lower bound of Lemma~\ref{Lem.lower}
directly from Theorem~\ref{Thm.Grieser},
without using the ``stripization'' procedure $S^*$
of Definition~\ref{defS*} and its properties stated in Lemma~\ref{Lem.technical}.
Indeed, inspired by the formula of
\cite[Sec.~11, Ex.~4]{Bellettini-Caselles-Novaga_2002}
for the annulus, let us introduce the function $V_t:\Gamma\times(-a,a)\to\R$ by
\[
  V_t(q,t) :=
  \left\{
  \begin{aligned}
  &\frac{(1-\kappa(q)\,a)\,(1+\kappa(q)\,a)-(1-\kappa(q)\,t)^2}
  {2 \, a \, \kappa(q) \, (1-\kappa(q)\,t)}
  && \mbox{if} \quad \kappa(q)\not=0 \,,
  \\
  &\frac{t}{a}
  && \mbox{if} \quad \kappa(q)=0 \,.
  \end{aligned}
  \right.
\]
Note that the value for vanishing curvature corresponds to
taking the limit $\kappa(q) \to 0$ in the formula for positive curvatures.
One easily checks that the vector field $V(q,t):=(0,V_t(q,t))$,
where the components are considered with respect to the coordinates~$(q,t)$,
satisfies $\|V\|_{L^\infty(\Gamma\times(-a,a))} = 1$ and
\[
(\div V)(q,t)
= \frac{1}{1-\kappa(q)\,t} \, \partial_t \big[(1-\kappa(q)\,t) \, V_t(q,t)\big]
= \frac{1}{a}
\]
for every $(q,t) \in \Gamma\times(-a,a)$.
Hence, the searched lower bound is a consequence of Theorem~\ref{Thm.Grieser}.
However, Lemma~\ref{Lem.technical} is needed to establish
some finer properties of the Cheeger constant and Cheeger set.}
\end{Remark}

\section{The main results}

This section is devoted to show our two main results, namely Theorem~\ref{Thm.Cheeger}, which deal with the case of curved annuli or not finite curved strips, and Theorem~\ref{Thm.bounded},
which deals with finite curved strips.

\subsection{The case of a curved annulus and that of a not finite curved strip}

\begin{Theorem}\label{Thm.Cheeger}
Let $\Gamma$ be compact, infinite or semi-infinite. Then
\begin{equation}\label{strip.complete}
h(\Omega_{\Gamma,a}) = \frac{1}{a} \,.
\end{equation}
In particular:
\begin{enumerate}
\item[\emph{(i)}]
If~$\Gamma$ is compact (\emph{i.e.}~$\Omega_{\Gamma,a}$ is a curved annulus),
then the infimum of~\eqref{Ch.constant}
is attained and the unique Cheeger set is $\C_{\Omega_{\Gamma,a}}=\Omega_{\Gamma,a}$.
\item[\emph{(ii)}]
If~$\Gamma$ is infinite or semi-infinite (\emph{i.e.}~$\Omega_{\Gamma,a}$ is an infinite or semi-infinite curved strip), then the infimum of~\eqref{Ch.constant}
is not attained, but the sequence $\Omega_{\Gamma_L,a}$ of Lemma~\ref{Lem.upper} is an optimizing sequence for $L\to \infty$.
\end{enumerate}
\end{Theorem}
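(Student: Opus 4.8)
The plan is to assemble the theorem from the two bounds already at our disposal, Lemma~\ref{Lem.upper} and Lemma~\ref{Lem.lower}, which between them cover all three geometries (compact, infinite, semi-infinite). First I would establish the value~\eqref{strip.complete}: the lower bound $h(\Omega_{\Gamma,a})\geq 1/a$ holds for a curved strip of any kind by Lemma~\ref{Lem.lower}, while the matching upper bound $h(\Omega_{\Gamma,a})\leq 1/a$ is exactly the content of Lemma~\ref{Lem.upper}. Combining the two gives $h(\Omega_{\Gamma,a})=1/a$ at once, with no further computation.

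For part~(i), the curved annulus, existence of a Cheeger set is immediate twice over: $\Omega_{\Gamma,a}$ is bounded, so Theorem~\ref{genprop}(i) applies, and in fact the first estimate in the proof of Lemma~\ref{Lem.upper} shows that the whole strip already realizes $P(\Omega_{\Gamma,a})/|\Omega_{\Gamma,a}|=1/a=h(\Omega_{\Gamma,a})$, so $\Omega_{\Gamma,a}$ is itself a Cheeger set. For uniqueness I would invoke the ``moreover'' clause of Lemma~\ref{Lem.lower}: since $h=1/a$, any connected Cheeger set must coincide with $\Omega_{\Gamma,a}$. To pass to a possibly disconnected Cheeger set $\C$, I would use that every connected component of $\C$ is again a Cheeger set (the argument behind Theorem~\ref{genprop}(vi)); each component therefore equals $\Omega_{\Gamma,a}$, and since $\Omega_{\Gamma,a}$ is connected there can be only one component, so $\C=\Omega_{\Gamma,a}$.

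For part~(ii), the infinite and semi-infinite strips, non-existence would be argued by contradiction. If some Cheeger set existed, then by Theorem~\ref{genprop}(vi) a connected one would exist, and the ``moreover'' clause of Lemma~\ref{Lem.lower} would force it to equal $\Omega_{\Gamma,a}$; but now $\Omega_{\Gamma,a}$ has infinite area and infinite perimeter, hence is not an admissible competitor in~\eqref{Ch.constant}, a contradiction. That the truncations form an optimizing sequence is then immediate: the second part of Lemma~\ref{Lem.upper} gives $P(\Omega_{\Gamma_L,a})/|\Omega_{\Gamma_L,a}|\to 1/a=h(\Omega_{\Gamma,a})$ as $L\to\infty$, which is exactly the defining property of a minimizing sequence for~\eqref{Ch.constant}.

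The main thing to be careful about is not a deep obstacle but the bookkeeping around connectedness: Lemma~\ref{Lem.lower} and its rigidity statement are proved for connected sets, so both the uniqueness in~(i) and the non-existence in~(ii) must route through the component-wise reduction supplied by Theorem~\ref{genprop}(vi). All the analytic difficulty has already been absorbed into Lemma~\ref{Lem.technical} and Lemma~\ref{Lem.lower}, so this final step is essentially an assembly of the preceding results.
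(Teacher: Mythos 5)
Your proof is correct and follows essentially the same route as the paper: the equality $h(\Omega_{\Gamma,a})=1/a$ comes from combining Lemma~\ref{Lem.upper} and Lemma~\ref{Lem.lower}, existence in case~(i) from boundedness (or, equivalently, from the strip itself attaining the ratio $1/a$), and both uniqueness in~(i) and non-existence in~(ii) from the rigidity clause of Lemma~\ref{Lem.lower}. Your additional bookkeeping via Theorem~\ref{genprop}(vi), reducing to connected Cheeger sets before invoking the rigidity statement (which is proved for connected competitors), is a sensible refinement that the paper leaves implicit, but it does not alter the substance of the argument.
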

\begin{proof}
The equality~\ref{strip.complete} follows directly from the upper estimate of Lemma~\ref{Lem.upper} and the lower estimate of Lemma~\ref{Lem.lower}.\par
From the characterization of Lemma~\ref{Lem.lower}, moreover, we know that the unique possible Cheeger set is the whole $\Omega_{\Gamma,a}$. Since this set has an infinite area and perimeter in the case of an infinite or semi-infinite curved strip, we get the non-existence result of a minimizer for the case~(ii), while the fact that $\Omega_{\Gamma_L,a}$ is a minimizing sequence for $L\to \infty$ follows by Lemma~\ref{Lem.upper}. On the other hand, in case~(i) we know by compactness that some Cheeger set must exist, hence the existence and uniqueness of the whole $\Omega_{\Gamma,a}$ as a Cheeger set again come by Lemma~\ref{Lem.lower}.
\end{proof}

\subsection{The case of a finite curved strip}

\begin{Theorem}\label{Thm.bounded}
Let~$\Gamma$ be non-complete and finite (hence, $\Omega_{\Gamma,a}$ is a finite curved strip). Then there exists a positive universal constant $c$ such that
\begin{equation}\label{strip.bounded}
\frac{1}{a} + \frac{c}{|\Gamma|}
\leq h(\Omega_{\Gamma,a}) \leq
\frac{1}{a} + \frac{2}{|\Gamma|} \,.
\end{equation}
For instance, one may take $c=1/400$. Moreover, the infimum in~\eqref{Ch.constant} is attained for some connected set $\C_{\Omega_{\Gamma,a}}\subsetneq \Omega_{\Gamma,a}$.
\end{Theorem}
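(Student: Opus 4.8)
The plan is to prove the three assertions—existence of a connected Cheeger set $\C_{\Omega_{\Gamma,a}}\subsetneq\Omega_{\Gamma,a}$, the upper bound, and the lower bound—separately, the last being by far the hardest. Since a finite curved strip is a bounded open set, Theorem~\ref{genprop}(i) provides a Cheeger set, and part~(vi) lets us take it connected; call it $\C$. For the upper bound I would simply test~\eqref{Ch.constant} with $S=\Omega_{\Gamma,a}$ itself: the computation already performed in the proof of Lemma~\ref{Lem.upper} (now with the two vertical ends contributing the term $4a$) gives $P(\Omega_{\Gamma,a})/|\Omega_{\Gamma,a}| = (2|\Gamma|+4a)/(2a|\Gamma|) = 1/a + 2/|\Gamma|$, whence $h\le 1/a+2/|\Gamma|$. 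To see that $\C\neq\Omega_{\Gamma,a}$ I would invoke Theorem~\ref{genprop}(v): the full strip has four corners of opening $\pi/2$ where the end segments meet the long sides, so it cannot be a Cheeger set. Together with the equality characterization of Lemma~\ref{Lem.lower} (if $h=1/a$ and a Cheeger set exists it must be the whole strip), this already forces the strict inequality $h>1/a$ and $\C\subsetneq\Omega_{\Gamma,a}$; the role of the lower bound is to make this strictness quantitative.

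For the lower bound I would first reduce to a one-dimensional problem. Let $S$ be the connected Cheeger set; by Lemma~\ref{Lem.technical} the stripization satisfies $P(S^*)/|S^*|\le P(S)/|S|=h$, so $S^*$ is again a Cheeger set and I may assume $S=S^*$, described by $f_\pm\in BV(\Gamma_S)$ with $\Gamma_S=(l,r)$ and $|\Gamma_S|\le|\Gamma|$. Writing $w:=f_+-f_-\in[0,2a]$ and $J:=1-\kappa\,(f_++f_-)/2\in[0,2]$, one has $|S^*|=\int_{\Gamma_S} wJ$, and applying the vector inequality $|u|+|v|\ge|u+v|$ to $u=(1-\kappa f_+,f_+')$ and $v=(1-\kappa f_-,-f_-')$ inside formula~\eqref{stimaper} yields
\[
P(S^*)\ \ge\ \int_{\Gamma_S}\sqrt{4J^2+(w')^2}\,dq \ +\ \big|D_s w\big|(\Gamma_S)\ +\ w(l)+w(r)\,.
\]
This collapses the estimate to the single width profile $w$, the curvature surviving only through the bounded weight $J$.

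The core is then to show the reduced ratio exceeds $1/a$ by at least $c/|\Gamma|$, and I would argue by dichotomy. If the width stays away from the maximum, say $\sup w\le 2a-\rho$, then $|S^*|=\int wJ\le(2a-\rho)\int J$ while $P(S^*)\ge 2\int J$, giving $P/|S|\ge 2/(2a-\rho)\ge 1/a+\rho/(2a^2)$ — this is the quantitative form of the bound $2/(t_+-t_-)$ from Lemma~\ref{Lem.lower}. Otherwise $w$ comes within $\rho$ of $2a$ at some interior point; then near each end either $w$ remains of order $a$, forcing $w(l)+w(r)\gtrsim a$, or $w$ descends from nearly $2a$ to order $a$, forcing $\int|w'|\gtrsim a$ and hence, through $\sqrt{4J^2+(w')^2}\ge|w'|$, an excess of order $a$ in the perimeter. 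In every case a fixed amount of perimeter of order $a$ beyond $\tfrac1a|S|$ is produced, and since $|S|=\int wJ\le 4a|\Gamma|$, dividing yields a gain of order $1/|\Gamma|$ with a purely numerical constant.

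The main obstacle is precisely this balancing. The delicate points are to use $\sqrt{4J^2+(w')^2}\ge 2J$ on the ``bulk'' and $\ge|w'|$ on the short ``tapering'' regions near the ends without double counting—which I would handle by splitting $(l,r)$ accordingly—and to control the weight $J$ uniformly from above in the denominator while it may degenerate in the numerator. Tracking the (deliberately crude) constants through the threshold $\rho$ and the splitting produces the explicit admissible value $c=1/400$; I would not attempt to optimize it, since the flat rectangle already shows that the sharp constant lies near $\pi/2$.
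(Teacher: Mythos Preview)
Your existence, strictness and upper-bound arguments are correct and match the paper. The collapse of $f_\pm$ into the single width profile $w=f_+-f_-$ via the triangle inequality $|u|+|v|\ge|u+v|$ applied to formula~\eqref{stimaper} is valid and is a genuine simplification over the paper's separate handling of $f_+$ and $f_-$.

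The gap is in Case~2 of your dichotomy. You propose to use $\sqrt{4J^2+(w')^2}\ge 2J$ on the ``bulk'' and $\sqrt{4J^2+(w')^2}\ge|w'|$ on the ``short tapering regions near the ends,'' but you never justify why those regions are short. If you split $\Gamma_S=B\cup T$ this way, the excess you control is
\[
\int_T |w'|\;-\;\int_T 2J\;+\;w(l)+w(r)\,,
\]
and since $\int_T 2J$ can be as large as $4|T|$, you need $|T|=O(a)$ for this to be $\gtrsim a$. Nothing in your setup forces this: a priori the profile $w$ of a competitor of the form $S^*$ could descend from near $2a$ to near $0$ over the whole length of $\Gamma_S$, with $|w'|\sim a/|\Gamma|$, and then the pointwise excess $\sqrt{4J^2+(w')^2}-2J\sim (w')^2/J$ integrates only to order $a^2/|\Gamma|$, not $a$. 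So the ``splitting $(l,r)$ accordingly'' does not close without a further ingredient.

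What the paper uses to supply that ingredient is the structure of $\partial\C\cap\Omega_{\Gamma,a}$ guaranteed by Theorem~\ref{genprop}(iv)--(v): it consists of arcs of circles of radius $1/h<a$, meeting $\partial\Omega_{\Gamma,a}$ tangentially. The paper first shows (its Step~II) that every such arc must touch one of the two end segments, and then (its Step~IV) that a single corner arc has horizontal extent $\eta\le 2a$ in the reference strip---this is precisely the ``short tapering'' fact you are missing. The case where $\C$ fails to touch both long sides is handled (its Step~V) by shrinking the strip, which corresponds to your Case~1. Once you invoke the arc geometry to localize the descent of $w$ to intervals of length $O(a)$, your $w$-based bookkeeping does go through and yields the same order of constant.
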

\begin{proof}
Concerning the existence of a Cheeger set $\C=\C_{\Omega_{\Gamma,a}}$, and in particular of a connected one, this follows by Theorem~\ref{genprop}. From the same Theorem, we know also that $\partial \C\cap \Omega_{\Gamma,a}$ is made by arcs of circle of radius $h(\Omega_{\Gamma,a})^{-1}$, and it cannot coincide with the whole set $\Omega_{\Gamma,a}$ again by Theorem~\ref{genprop}, since $\C$ may not have corners. As a consequence, by the characterization of Lemma~\ref{Lem.lower} we deduce that $h(\Omega_{\Gamma,a})>1/a$. To conclude, we have then only to give a proof of the bounds~(\ref{strip.bounded}), which will be done in some steps.
\step{I}{The upper bound.}
Obtaining the upper bound is very easy: it is enough to remind that
\begin{align*}
P\big(\Omega_{\Gamma,a}\big)= 2 |\Gamma| + 4a\,, && \big|\Omega_{\Gamma,a}\big|= 2 a\big|\Gamma\big|\,,
\end{align*}
as already checked for instance in~(\ref{approximation}), and then
\[
h(\Omega_{\Gamma,a}) \leq \frac{P\big(\Omega_{\Gamma,a}\big)}{\big|\Omega_{\Gamma,a}\big|}
=\frac{2 |\Gamma| + 4a}{2a\big|\Gamma\big|}
=\frac{1}{a} + \frac{2}{|\Gamma|}
\,.
\]
\step{II}{The lower bound: behaviour of the arcs of $\partial\C \cap \Omega_{\Gamma,a}$.}
Thanks to Theorem~\ref{genprop}, we know that $\partial\C$ can not have corners. Hence, $\partial \C\cap \Omega_{\Gamma,a}$ is not empty, and it is done by some arcs of circle, all of radius $1/h(\Omega_{\Gamma,a})$, hence strictly smaller than $a$ as noticed above, such that all the four corners of $\Omega_{\Gamma,a}$ are ruled out from $\C$. Denoting by $q_0$ and $q_1$ the extreme points of $\Gamma$, let us call for simplicity ``up'', ``down'', ``left'' and ``right'' the four parts of $\partial \Omega_{\Gamma,a}$ given by the points of the form $\L(q,a)$, $\L(q,-a)$, $\L(q_0,t)$ and $\L(q_1,t)$ for $q\in\Gamma$ and $t\in (-a,a)$ respectively. We aim to show the following claim:
\begin{equation}\label{stepII}\begin{array}{c}
\hbox{\emph{All the arcs of circle of $\partial \C\cap\Omega_{\Gamma,a}$ connect two points of $\Omega_{\Gamma,a}$,}}\\
\hbox{\emph{at least one of which is either in the left or in the right part.}}
\end{array}\end{equation}
To show this claim, we have to exclude the case of an arc of circle starting and ending in the upper part, and the case of an arc connecting the up and the down (the case of an arc starting and ending in the bottom part is exactly the same as the first one).\par
Suppose first that there is an arc of circle connecting the points $P$ and $Q$, both in the upper part, thus $P=\L(q',a)$ and $Q=\L(q'',a)$. By Theorem~\ref{genprop}, we know that the circle is tangent to $\partial \Omega_{\Gamma,a}$ at $P$ and $Q$, hence its centre $O$ is the intersection between the two lines which are normal to $\partial\Omega_{\Gamma,a}$ at $P$ and $Q$, which are $t\mapsto \L(q',t)$ and $t\mapsto \L(q'',t)$. Since the radius $r$ of these circles is at most $a$, the two lines must intersect in the point $\L(q',a-r)\equiv \L(q'',a-r)$, while this is impossible for any $r\leq 2a$ because $\L$ is one-to-one.\par
A very similar argument works assuming that an arc of circle connects the point $P=\L(q',a)$ in the upper part with the point $Q=\L(q'',-a)$ in the lower part. Indeed, again the circle would be tangent to $\partial\Omega_{\Gamma,a}$ at both $P$ and $Q$, so that its centre would be in the intersection of the segments $t\mapsto \L(q',t)$ and $t\mapsto \L(q'',t)$. This is impossible if the circle has radius smaller than $2a$ for $q'\neq q''$, but it is impossible also for a radius strictly smaller than $a$ in the case $q'=q''$. This completely shows~(\ref{stepII}).\par
Notice now that by definition the left and the right part of $\partial\Omega_{\Gamma,a}$ are segments, so also the case of a circle starting and ending in the left is impossible, as well as an arc starting and ending in the right. As a conclusion, we now know that there can be either $2$, or $3$, or $4$ arcs of circle in $\partial\C$. The simplest case is when there are four arcs, each of which making a ``rounded corner''. This happens for instance for a rectangle (\emph{i.e.}, if $\Gamma$ is a segment), and more in general if $a$ is sufficiently small with respect to $\big|\Gamma\big|$. However, it is also possible that there are only three arcs, one of which connecting the left and the right part of the boundary. This happens for instance whenever the upper or the lower part of the boundary are very short due to a big (but still admissible) curvature of $\Gamma$. An example of this situation is a sector of an annulus with very small inner radius, which then is very similar to a triangle: in this case the boundary of $\C$ does not touch the inner circle (some examples of this kind are shown in the next section). Concerning the last possibility, namely only two arcs of circle both connecting left and right, we have no example in mind and it is maybe impossible,
but we do not need to exclude this case within this proof. Indeed, in Steps~III and~IV we will show the Theorem in the case of the four rounded corners, while in the last Step~V we will show how it is always possible to reduce to this case.

\step{III}{The lower bound: the case when $\C$ has four rounded corners, statement of the Claim~(\ref{claim}).}
To show the lower bound, we start from the case when $\C$ has four rounded corners. Let us recall that, as shown by~(\ref{stimaper}) in Lemma~\ref{Lem.technical}, one has
\begin{equation}\label{stimaper2}\begin{split}
P(\C) &= \int_\Gamma \sqrt{\big(1-\kappa(q)\,f_+(q)\big)^2+f_+'(q)^2} \, dq + \int_\Gamma \sqrt{\big(1-\kappa(q)\,f_-(q)\big)^2+f_-'(q)^2} \, dq\\
&\hspace{60pt}+ \big|D_s f_+\big|(\Gamma)+\big|D_s f_-\big|(\Gamma)+ \Big( f_+(q_0)-f_-(q_0)\Big)+\Big( f_+(q_1)-f_-(q_1)\Big)\,,
\end{split}
\end{equation}
where $f_\pm'\,dq$ is the absolute continuous part of $Df_\pm$ and $D_s f_\pm$ its singular part (notice that, in the language of Definition~\ref{defS*}, we have $\Gamma_{\C}=\Gamma$ thanks to Step~II). Hence, in particular
\[
P(\C) \geq \int_\Gamma \bigg( 2-\kappa(q)\Big(f_+(q)+f_-(q)\Big) \bigg)\, dq\,.
\]
We claim that, at least in the case when $\C$ has the four corners,
\begin{equation}\label{claim}
P(\C) \geq \int_\Gamma \bigg(2-\kappa(q)\Big(f_+(q)+f_-(q)\Big)\bigg) \, dq + \frac{1}{50} \,a\,.
\end{equation}
We will prove this estimate in next step, now we show how this implies the thesis. In fact, we can easily estimate, as in~(\ref{exp}), the area of $\C$ as
\[\begin{split}
\big|\C\big| &= \int_\Gamma \int_{f_-(q)}^{f_+(q)} \big(1-\kappa(q)\,t\big) \, dt \, dq
= \int_\Gamma \Big(f_+(q)-f_-(q)\Big) \bigg( 1 -\kappa(q)\,\frac{f_+(q)+f_-(q)}{2}\bigg)\,dq\\
&\leq a  \int_\Gamma \bigg(2-\kappa(q)\,\Big(f_+(q)+f_-(q)\Big) \bigg)dq\,.
\end{split}\]
Hence, using~(\ref{claim}) we get~(\ref{strip.bounded}) because,
recalling that $a \|\kappa\|_{L^\infty(\Gamma)} \leq 1$ (as pointed out in Section~\ref{GP})
\begin{equation}\label{cp}\begin{split}
h(\Omega_{\Gamma,a}) &= \frac{P\big(\C\big)}{\big|\C\big|}
\geq \frac{\bal \int_\Gamma \bigg(2-\kappa(q)\Big(f_+(q)+f_-(q)\Big)\bigg) \, dq + \frac{1}{50} \,a\eal}{\bal a  \int_\Gamma \bigg(2-\kappa(q)\,\Big(f_+(q)+f_-(q)\Big)\bigg)\, dq\eal}\\
&= \frac{1}{a}+ \frac{1}{50\bal  \int_\Gamma \bigg(2-\kappa(q)\Big(f_+(q)+f_-(q)\Big) \bigg)\, dq\eal}
\geq \frac{1}{a}+ \frac{1}{50\big|\Gamma\big|\big(2+2 a \|\kappa\|_{L^\infty(\Gamma)} \big)}
\geq \frac{1}{a} + \frac{1}{200\,|\Gamma|}\,.
\end{split}\end{equation}

\step{IV}{The lower bound: the case when $\C$ has four rounded corners, proof of Claim~(\ref{claim}).}
Here we show that, assuming that $\partial\C\cap \Omega_{\Gamma,a}$ consists of four arcs of circle, the claim~(\ref{claim}) holds. This will be done by considering a single arc. To choose it, we start noticing that~(\ref{stimaper2}) already trivially implies~(\ref{claim}) if $f_+(q_1) - f_-(q_1) \geq a/50$. As a consequence, we can assume that
\begin{equation}\label{upperright}
f_+(q_1) \leq \frac{1}{100}\, a
\end{equation}
and we concentrate on the arc of circle corresponding to the ``upper right corner''. Of course, if~(\ref{upperright}) were not true, then one could assume $f_-(q_1)\geq - a/100$ and then make the completely symmetric considerations on the ``lower right corner''. As shown in Figure~\ref{figTHREE}, we call $\gamma$ the arc of circle that we are considering, and we can also look $\gamma$ in the reference rectangle, where of course it is no more part of a circle. We will call $\eta$ as in the Figure.\par
\begin{figure}[htbp]
\begin{center}
\includegraphics[width=0.99\textwidth]{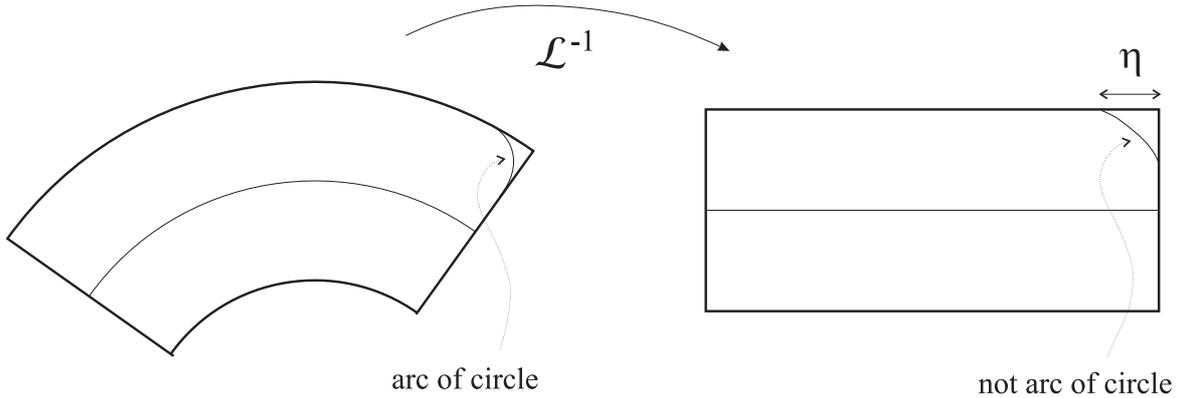}
\end{center}
\caption{The situation (both in $\Omega$ and in the reference configuration) of Step~IV.}
\label{figTHREE}
\end{figure}

Calling $\Gamma_\gamma$ the part of $\Gamma$ related to the curve $\gamma$, hence the subset of $\Gamma$ such that
\[
\gamma = \Big\{ \big(q,f_+(q)\big):\, q\in \Gamma_\gamma \Big\}\,,
\]
we can subdivide $\Gamma_\gamma$ in two parts, namely
\begin{align*}
\Gamma_1:= \bigg\{ q\in\Gamma_\gamma:\, \big|f_+'(q)\big|<\frac 15 \bigg\} \,, && \Gamma_2:= \bigg\{ q\in\Gamma_\gamma:\, \big|f_+'(q)\big|\geq \frac15 \bigg\}\,.
\end{align*}
Notice that the above subdivision makes sense because $f_+$ has no singular part inside $\Gamma_\gamma$ (since the image of its graph under $\L$ is an arc of circle). By definition,
\begin{equation}\label{1pic}
\int_{\Gamma_1} \big|f_+'(q)\big|\, dq \leq \frac{\big| \Gamma_1 \big|}{5} \leq \frac{\eta}{5} \leq \frac 25 \,a\,.
\end{equation}
In the last inequality we used that $\eta\leq 2a$, which in turn is true because $\gamma$ is an arc of circle or radius smaller than $a$ (keep in mind that by Lemma~\ref{Lem.lower} we already know that
$h(\Omega_{\Gamma,a})\geq 1/a$) and the lengths in the reference rectangle are at most the double of the true lengths (recall also that the arcs of circle touch $\partial\Omega_{\Gamma,a}$ tangentially, as we know by Theorem~\ref{genprop}). But then, thanks to~(\ref{upperright}), one has
\[
\int_{\Gamma_\gamma} \big|f_+'(q)\big| \, dq  \geq \frac{99}{100}\,a\,,
\]
so that by~(\ref{1pic}) we get
\begin{equation}\label{use1}
\int_{\Gamma_2} \big|f_+'(q)\big|\, dq \geq \bigg(\frac{99}{100}-\frac 25\bigg)\,a=\frac{59}{100}\,a\,.
\end{equation}
Recalling again that $0<1-\kappa(q)f_+(q) < 2$, a trivial calculation ensures that for any $q\in\Gamma_2$ it is
\begin{equation}\label{use2}
\sqrt{\big(1-\kappa(q)\,f_+(q)\big)^2+f_+'(q)^2} \geq \big(1-\kappa(q)\,f_+(q)\big) +\frac {1}{25}\, \big|f_+'(q)\big|\,.
\end{equation}
Hence, thanks to~(\ref{use1}) and~(\ref{use2}) we can estimate the length of $\gamma$ as
\[\begin{split}
\big| \gamma \big| &= \int_{\Gamma_\gamma} \sqrt{\big(1-\kappa(q)\,f_+(q)\big)^2+f_+'(q)^2} \, dq
\geq \int_{\Gamma_\gamma} \big(1-\kappa(q)\,f_+(q)\big)\, dq +\frac 1{25} \int_{\Gamma_2} \big| f_+'(q) \big|\,dq\\
&\geq \int_{\Gamma_\gamma} \big(1-\kappa(q)\,f_+(q)\big)\, dq +\frac{59}{25\cdot 100}\,a\,.
\end{split}\]
Recalling now formula~(\ref{stimaper2}) for the perimeter of $\C$, we finally conclude
\[
P(\C) \geq \int_\Gamma \bigg(2-\kappa(q)\Big(f_+(q)+f_-(q)\Big) \bigg)\, dq +  \frac{59}{25\cdot 100}\,a\,,
\]
thus finally getting~(\ref{claim}).

\step{V}{The lower bound: general case.}
In this last step we conclude the proof of the Theorem. Thanks to the above steps, we already know that the result holds in the case of four rounded corners, hence we can now assume that $\partial\C$ has only two or three arcs. In this case, there exist two maximal numbers $a^\pm\leq a$ such that $\C \subseteq \L\big(\Gamma\times (-a^-,a^+)\big)$. Let us now introduce a new strip $\Omega_{\widetilde\Gamma,\tilde a}$ by
\begin{align*}
\bar t:=\frac{a^+-a^-}{2}\,,&& \widetilde \Gamma:=\L\Big(\Gamma\times \{\bar t\}\Big)\,, && \tilde a := \frac{a^++a^-}{2}\,.
\end{align*}
Notice that there is a bijective map $\varphi:\Gamma\to\widetilde\Gamma$ given by $\varphi(q) = \L\big(q,\bar t\big)$, and that since $\widetilde\Gamma$ is by construction parallel to $\Gamma$, then the normal vector $N(q)$ to $\Gamma$ at $q$ coincides with the normal vector $\widetilde N\big(\varphi(q)\big)$ to $\widetilde \Gamma$ at $\varphi(q)$. Thus, being $\Omega_{\widetilde\Gamma,\tilde a}$ a subset of $\Omega_{\Gamma,a}$, the injectivity condition~(\ref{Ass.basic}) trivially holds also for $\widetilde\Gamma$ and $\tilde a$, and we can conclude that the strip $\Omega_{\widetilde\Gamma,\tilde a}$ is admissible for our purposes.\par
By construction, we have $\C\subseteq \Omega_{\widetilde \Gamma,\tilde a}$, hence $\C$ is also the Cheeger set of $\Omega_{\widetilde\Gamma,\tilde a}$. Moreover, by maximality of $a^\pm$ we know that $\C$ touches all the four parts of the boundary of $\Omega_{\widetilde\Gamma,\tilde a}$, so the preceding steps, and in particular~(\ref{cp}), allow to deduce that
\[
h\big(\Omega_{\Gamma,a}\big) = \frac{P(\C)}{|\C|} = h\big(\Omega_{\widetilde\Gamma,\tilde a}\big) \geq \frac{1}{\tilde a} + \frac{1}{200\big|\widetilde \Gamma\big|}\,.
\]
Finally, by definition $\tilde a\leq a$, while
\[
\big| \widetilde\Gamma\big| = \int_\Gamma \Big(1 -\bar t\kappa(q)\Big)\,dq \leq 2 \big|\Gamma\big|\,.
\]
Thus, we get~(\ref{strip.bounded}) with the constant $c=1/400$.
\end{proof}

\section{Solvable models}

In this section we discuss our results on the basis of several examples of curved strips about circles and circular arcs. They are referred to as \emph{solvable models} since the determination of the Cheeger constant and the Cheeger set is reduced to solving an explicit algebraic equation. Where the exact solution is not available, we have solved the problem with help of standard numerical tools.

\bigskip
\subsection{Annuli}
Probably the simplest example is given by annuli,
\emph{i.e.}\ strips built about (full) circles,
see Figure~\ref{Fig.annulus}.
Then the Cheeger set is the strip itself
and the Cheeger constant equals the half of the distance
between the boundary curves.
It follows from out Theorem~\ref{Thm.Cheeger} that exactly
the same situation holds for general curved annuli. Let us remark that also
discs can be thought as examples of curved strips.
Indeed, a disc with its central point removed
has the same Cheeger set (up to the point)
and Cheeger constant as the disc,
and the former set can be considered
as the limit case of the annulus built about the circle
of radius $a+\eps$ when $\eps \to 0+$.

\begin{figure}[htbp]
\begin{center}
\includegraphics[width=0.2\textwidth]{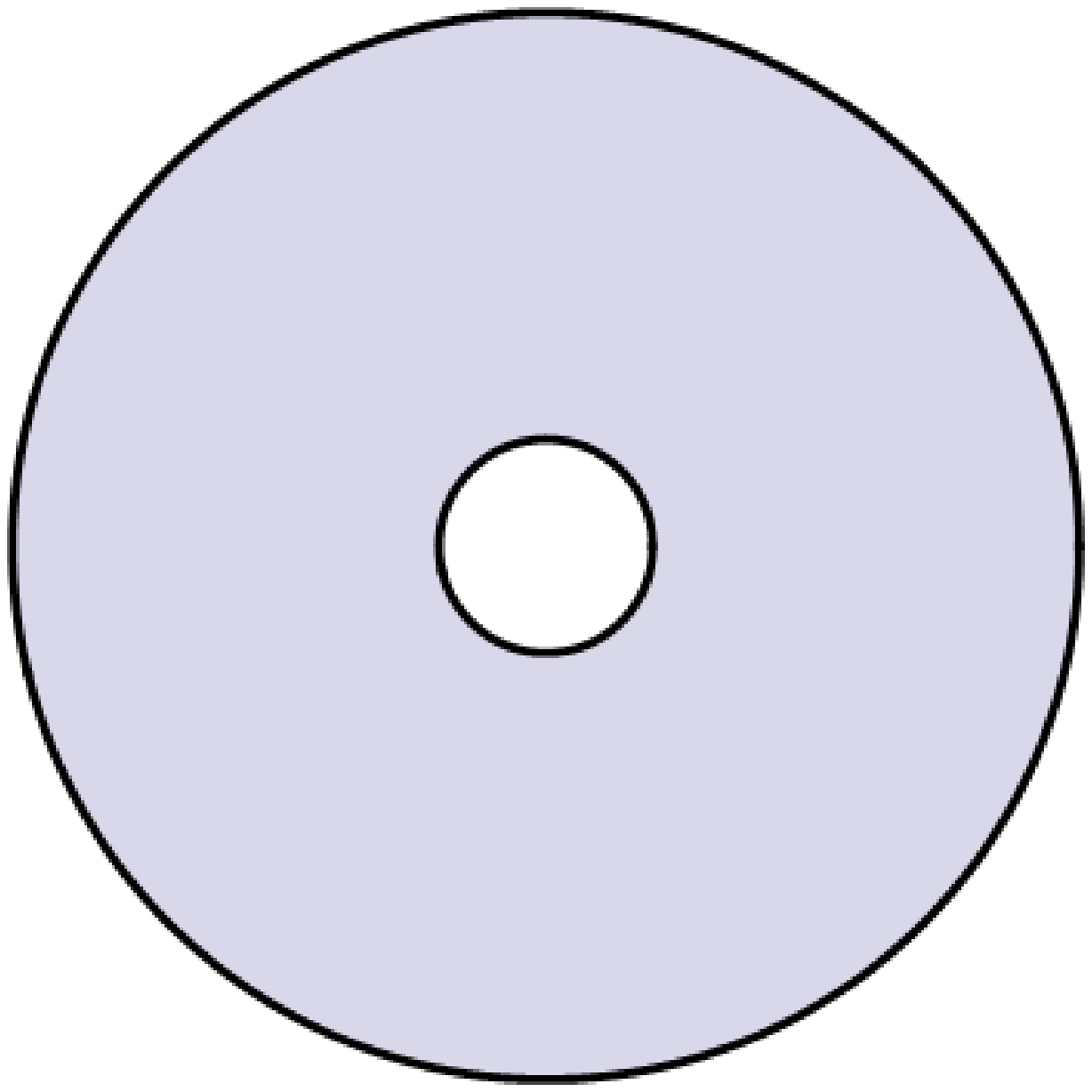}
\qquad
\includegraphics[width=0.2\textwidth]{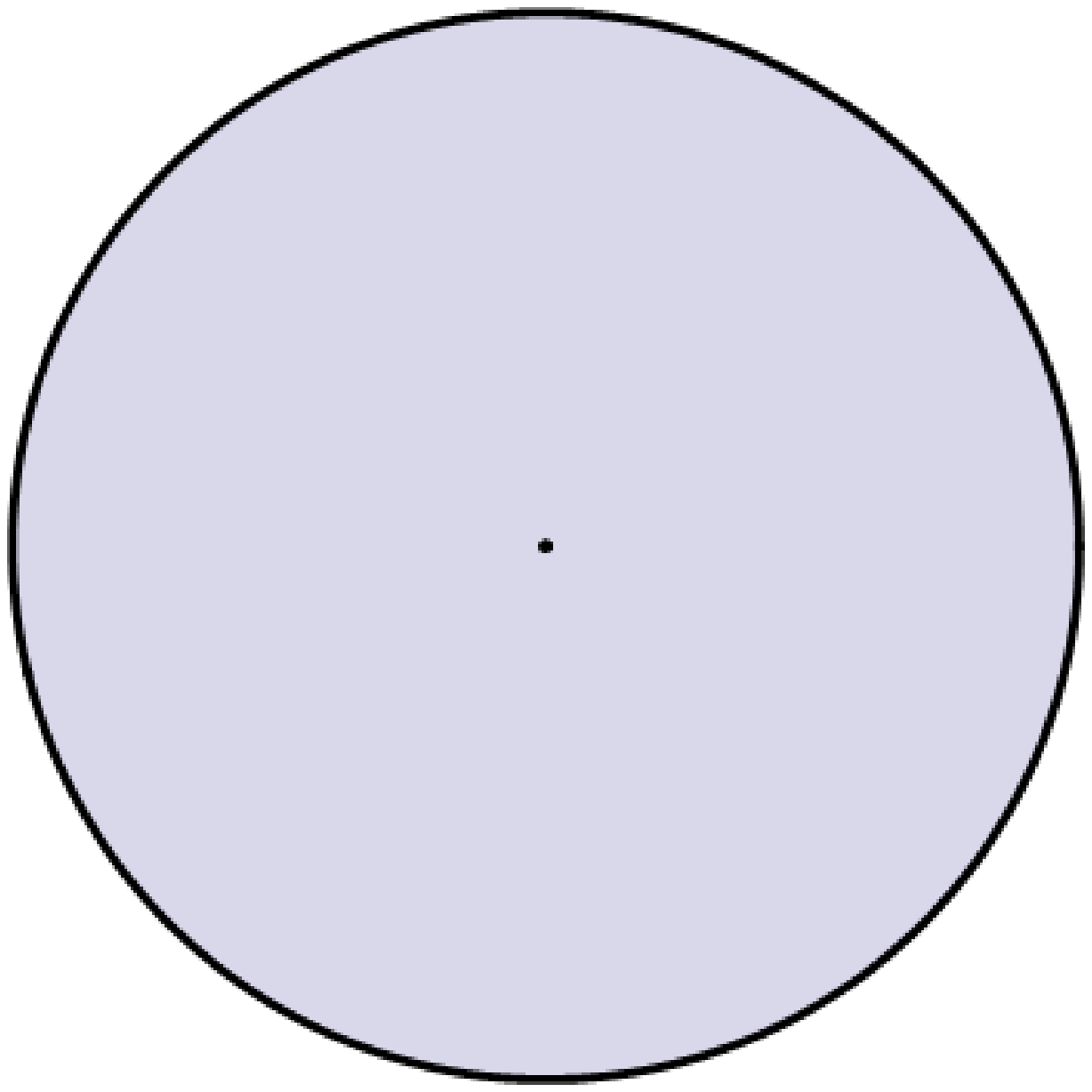}
\end{center}
\caption{The annulus and the disc considered as its limit case}
\label{Fig.annulus}
\end{figure}

\subsection{Rectangles}
The rectangle $\mathcal{R}_{a,b}:=(-b,b)\times(-a,a)$, with $a,b > 0$,
can be considered as a strip built about the segment $\Gamma:=(-b,b)\times\{0\}$.
Using Theorem~\ref{Thm.convex}, it is easy to find its Cheeger constant
explicitly:
$$
  h(\mathcal{R}_{a,b}) =
  \frac{a + b + \sqrt{(a - b)^2 + \pi a b}}{2 a b}
  \,.
$$
Notice the scaling $h(\mathcal{R}_{a,b})=h(\mathcal{R}_{1,b/a})$.
The procedure also determines the Cheeger set of~$\mathcal{R}_{a,b}$
as the rectangle with its corners rounded off by circular arcs
of radius $h(\mathcal{R})^{-1}$, see Figure~\ref{Fig.rectangle}.

\begin{figure}[htbp]
\begin{center}
\includegraphics[width=0.4\textwidth]{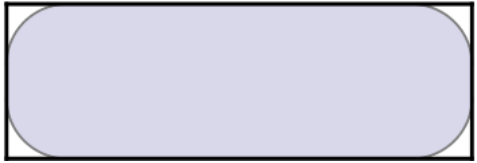}
\end{center}
\caption{The rectangle and its Cheeger set (light gray) for $b/a=3$}
\label{Fig.rectangle}
\end{figure}

The Cheeger constant can be written as
$$
  h(\mathcal{R}_{a,b}) = \frac{1}{a} + \frac{k(a,b)}{|\Gamma|}
  \,, \qquad \mbox{where} \qquad
  k(a,b) :=
  \frac{a - b + \sqrt{(a - b)^2 + \pi a b}}{a}
$$
and $|\Gamma|=2b$. Notice the scaling $k(a,b)=k(1,b/a)$.
It is straightforward to check that $b/a \mapsto k(a,b)$
is a decreasing function with the limits
$k(a,b) \to 2$ as $b/a \to 0$
and $k(a,b) \to \pi/2$ as $b/a \to \infty$.
Hence the upper bound of Theorem~\ref{Thm.bounded}
becomes sharp in the limit of very narrow rectangles.
The dependence of the Cheeger constant~$h$ and of the quantity~$k$
on rectangle parameters is shown in Figure~\ref{Fig.rectangle.graphs}.

\begin{figure}[htbp]
\begin{center}
\includegraphics[width=0.4\textwidth]{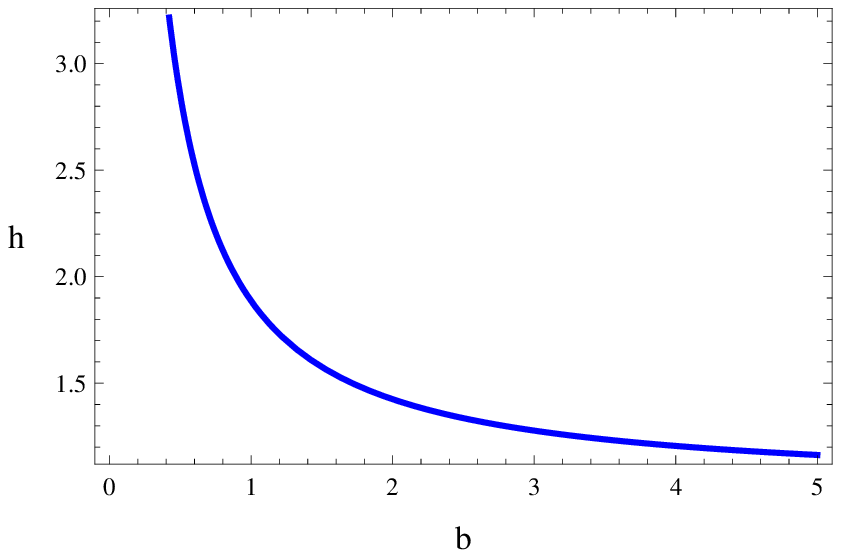}
\qquad
\includegraphics[width=0.4\textwidth]{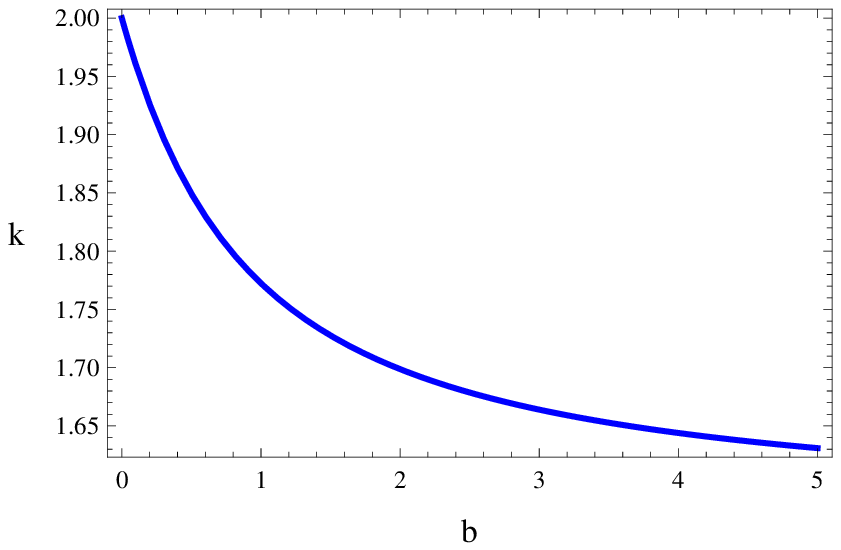}
\end{center}
\caption{The Cheeger constant~$h$
and the quantity~$k$ for rectangles with $a=1$
}
\label{Fig.rectangle.graphs}
\end{figure}

\subsection{Sectors}
Let~$\Gamma_a$ be the circle of curvature $\kappa=a^{-1}$
and consider its part~$\Gamma_a^{\alpha}$ of length
$|\Gamma_a^\alpha| = \alpha a$,
with any $\alpha\in(0,2\pi)$, see Figure~\ref{Fig.sector}.
The corresponding strip $\Omega_a^\alpha := \Omega_{\Gamma_a^\alpha,a}$
does not satisfy the assumption~\eqref{Ass.basic}.
However, since $\L$ is in fact injective in $\Gamma_a\times (-a,a)$,
it can be considered as a limit case
of admissible strips along corresponding parts of the circle
of radius $a+\eps$ when $\eps \to 0+$.

\begin{figure}[htbp]
\begin{center}
\includegraphics[width=0.4\textwidth]{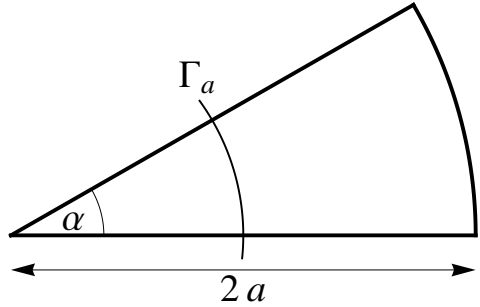}
\end{center}
\caption{The sector of a disc considered as a strip
built about the $(\frac{\alpha}{2\pi})^\mathrm{th}$-part
of a circle
}
\label{Fig.sector}
\end{figure}

The Cheeger constant and the Cheeger set of~$\Omega_a^\alpha$
can be found as follows.
Firstly, we construct a family of domains~$S_r$, with $r\in(0,a)$,
defined by rounding off the corners in~$\Omega_a^\alpha$
of angle smaller than~$\pi$ by circular arcs of radius~$r$.
This can be done by a straightforward usage of elementary geometric rules.
Secondly, we minimize the quotient $P(S_r)/|S_r|$ with respect to~$r$,
which is done with help of a numerical optimization.
The minimum of the quotient corresponds to the Cheeger constant
and the minimizer is the Cheeger set.
The procedure is equivalent to using Theorem~\ref{Thm.convex},
which seems to remain valid also for $\alpha > \pi$,
corresponding to non-convex sectors.

In view of the obvious scaling
$h(\Omega_a^\alpha) = h(\Omega_1^\alpha) / a$,
one can restrict to $a=1$, without loss of generality.
The dependence of the Cheeger constant on~$\alpha$
is shown in Figure~\ref{Fig.sector.graphs}.
Table~\ref{Tab.sector} contains numerical values
for some specific angles.

Writing the Cheeger constant as
$$
  h(\Omega_a^\alpha) = \frac{1}{a} + \frac{k(\alpha)}{|\Gamma_a^\alpha|}
  \,,
$$
we also study the dependence of the constant~$k(\alpha)$ on~$\alpha$,
see Figure~\ref{Fig.sector.graphs} and Table~\ref{Tab.sector}.
The third value of~$\alpha$ in Table~\ref{Tab.sector} corresponds
to the maximal point of the curve $\alpha \mapsto k(\alpha)$
from Figure~\ref{Fig.sector.graphs}.
In any case, we see that the upper bound of Theorem~\ref{Thm.bounded}
is quite good for all the sectors. Finally, Figure~\ref{Fig.sector.sets} shows a numerical approximation of the Cheeger sets for some annuli.

\begin{figure}[htbp]

\begin{center}
\includegraphics[width=0.4\textwidth]{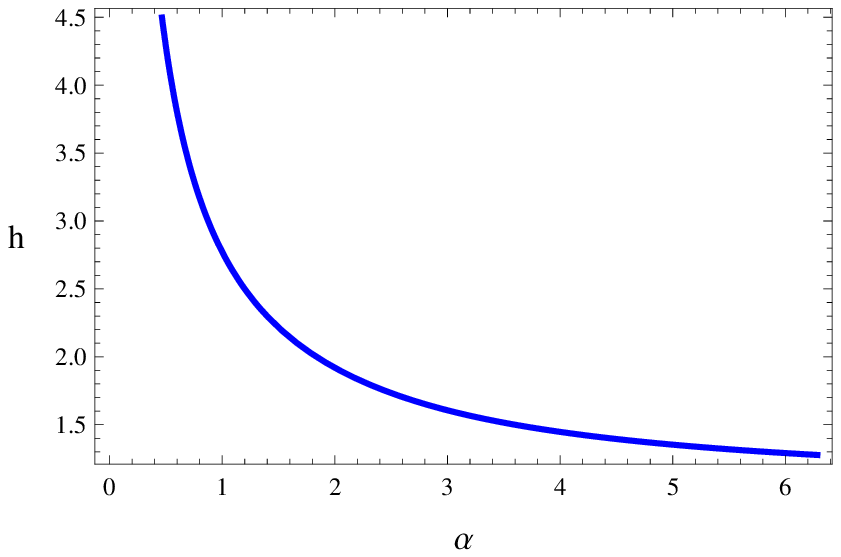}
\qquad
\includegraphics[width=0.4\textwidth]{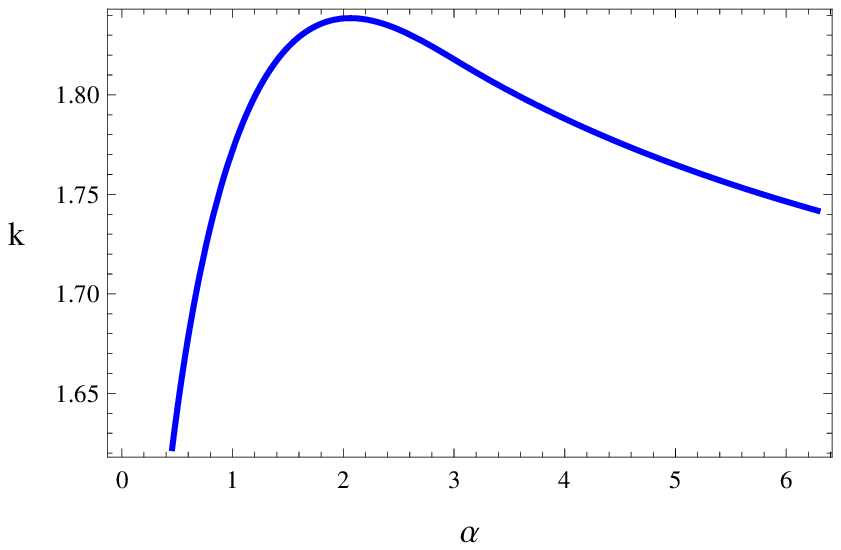}
\end{center}
\caption{The Cheeger constant~$h$ and the constant~$k$ for sectors with $a=1$}
\label{Fig.sector.graphs}
\end{figure}
\begin{table}[htbp]
\begin{center}
\begin{tabular}{|c||c|c|c|c|c|c|c|}
\hline
$\alpha$
& $\pi/10$
& $\pi/2$
& $0.656749 \, \pi$
& $3\pi/4$
& $\pi$
& $3\pi/2$
& $2\pi$
\\ \hline
$h(\Omega_1^\alpha)$
& 5.92687
& 2.16358
& 1.89111
& 1.77915
& 1.57714
& 1.37582
& 1.27722
\\ \hline
$k(\Omega_1^\alpha)$
& 1.54782
& 1.82774
& 1.83856
& 1.83583
& 1.81315
& 1.77101
& 1.74184
\\ \hline
\end{tabular}
\end{center}
\caption{The Cheeger constant~$h$
and the constant~$k$ for sectors with $a=1$
}
\label{Tab.sector}
\end{table}
\begin{figure}[htbp]
\begin{center}
\begin{tabular}{ccc}
\includegraphics[width=0.3\textwidth]{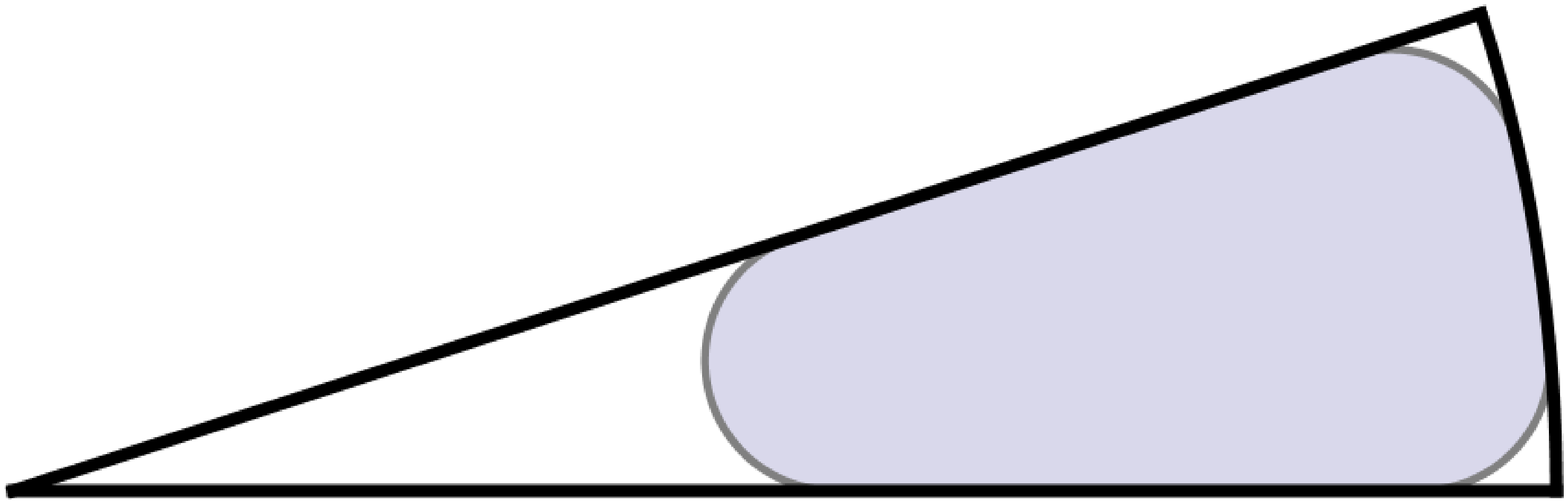}
& \includegraphics[width=0.2\textwidth]{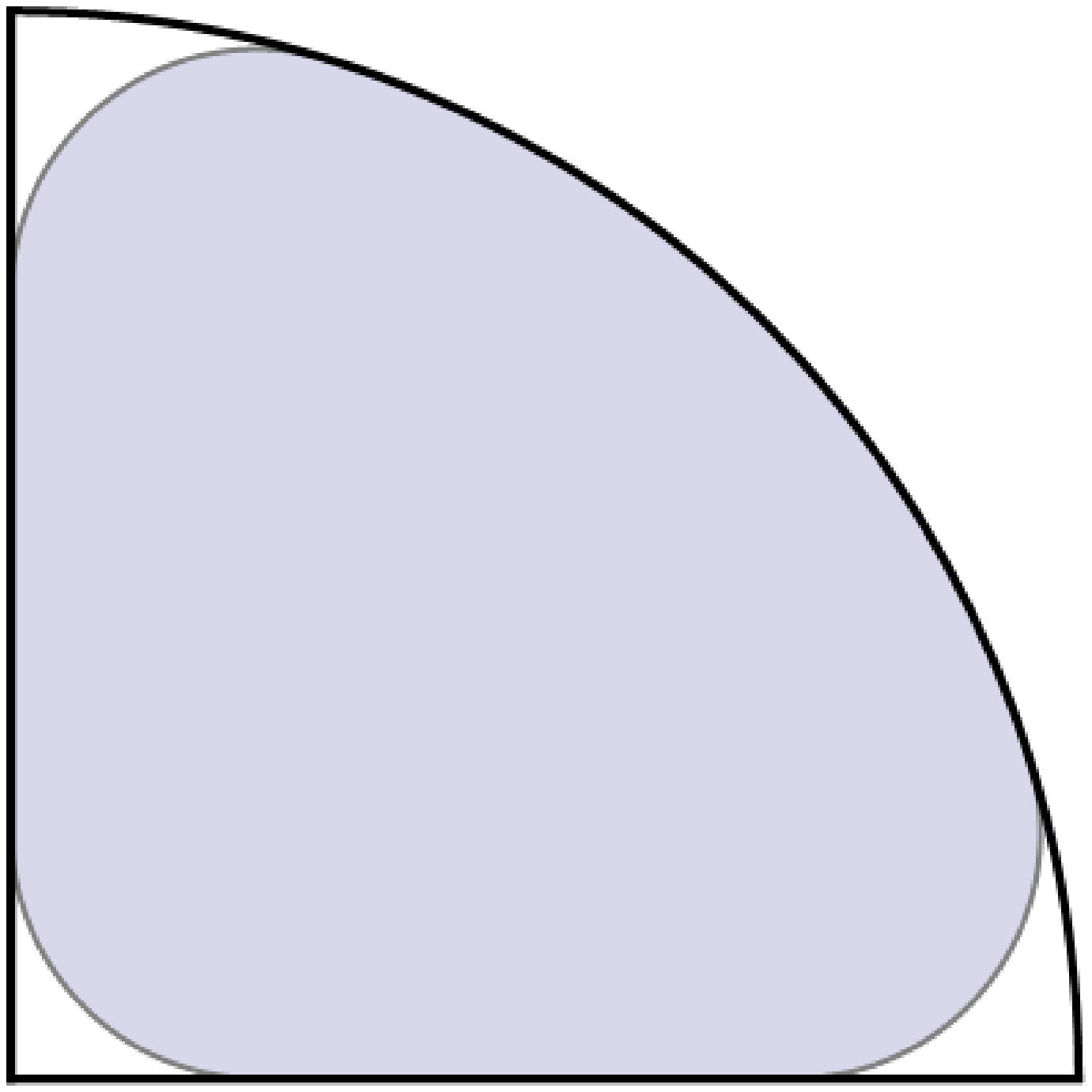}
& \includegraphics[width=0.3\textwidth]{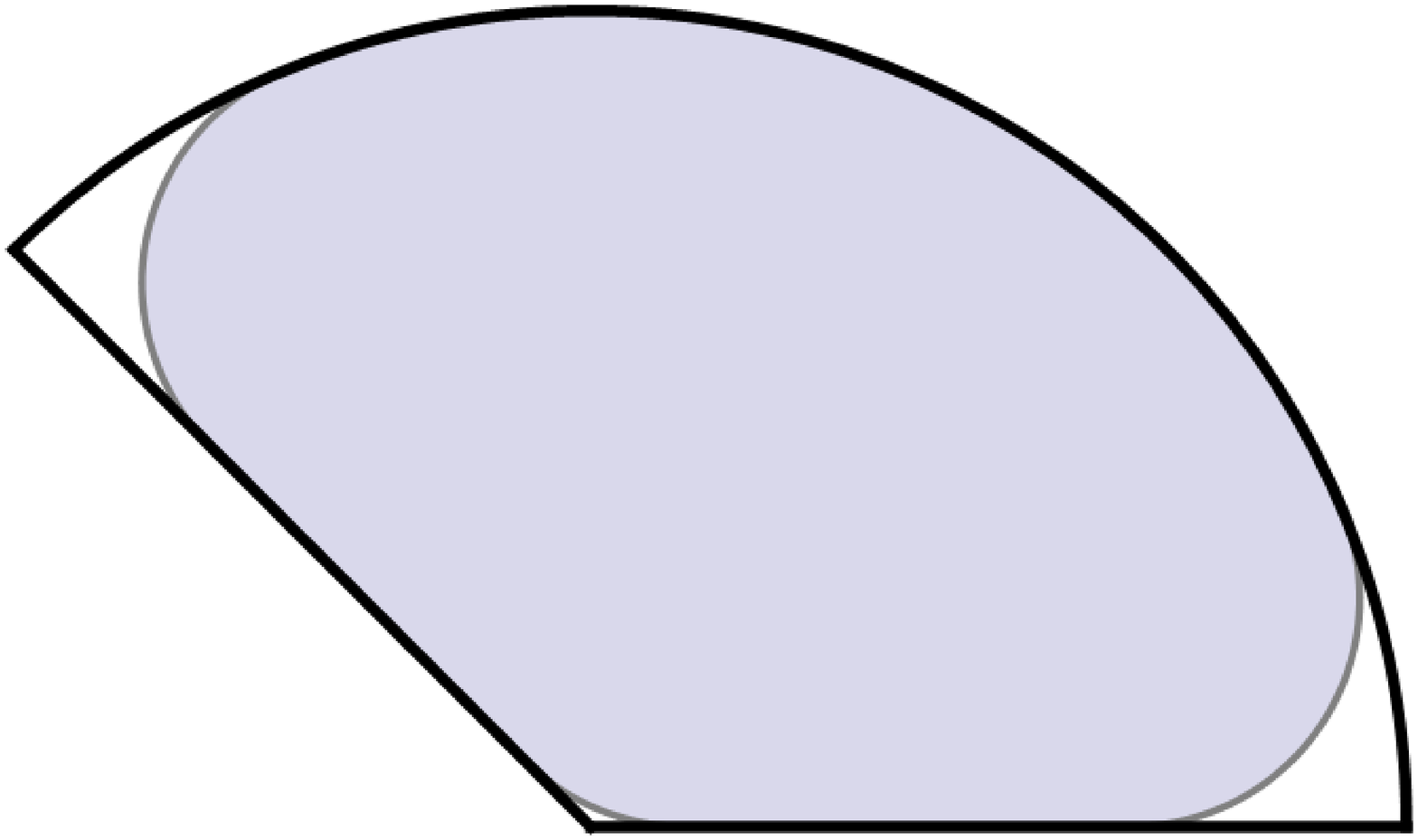}
\\
$\alpha=\pi/10$
& $\alpha=\pi/2$
& $\alpha=3\pi/4$
\bigskip \\
\includegraphics[width=0.3\textwidth]{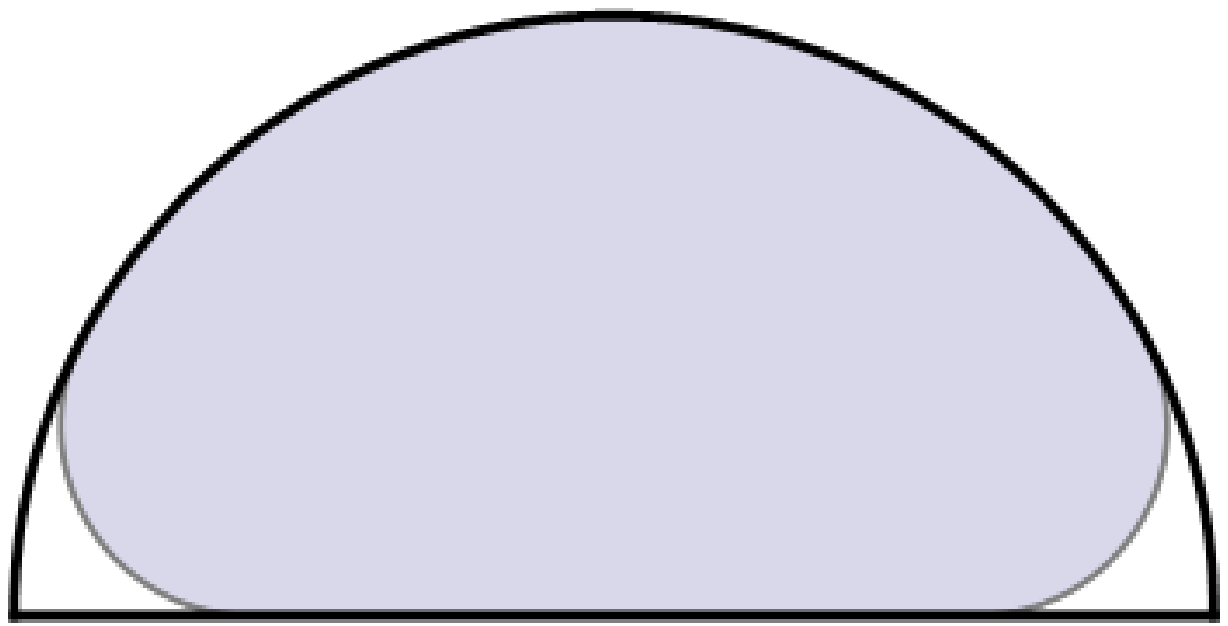}
& \includegraphics[width=0.3\textwidth]{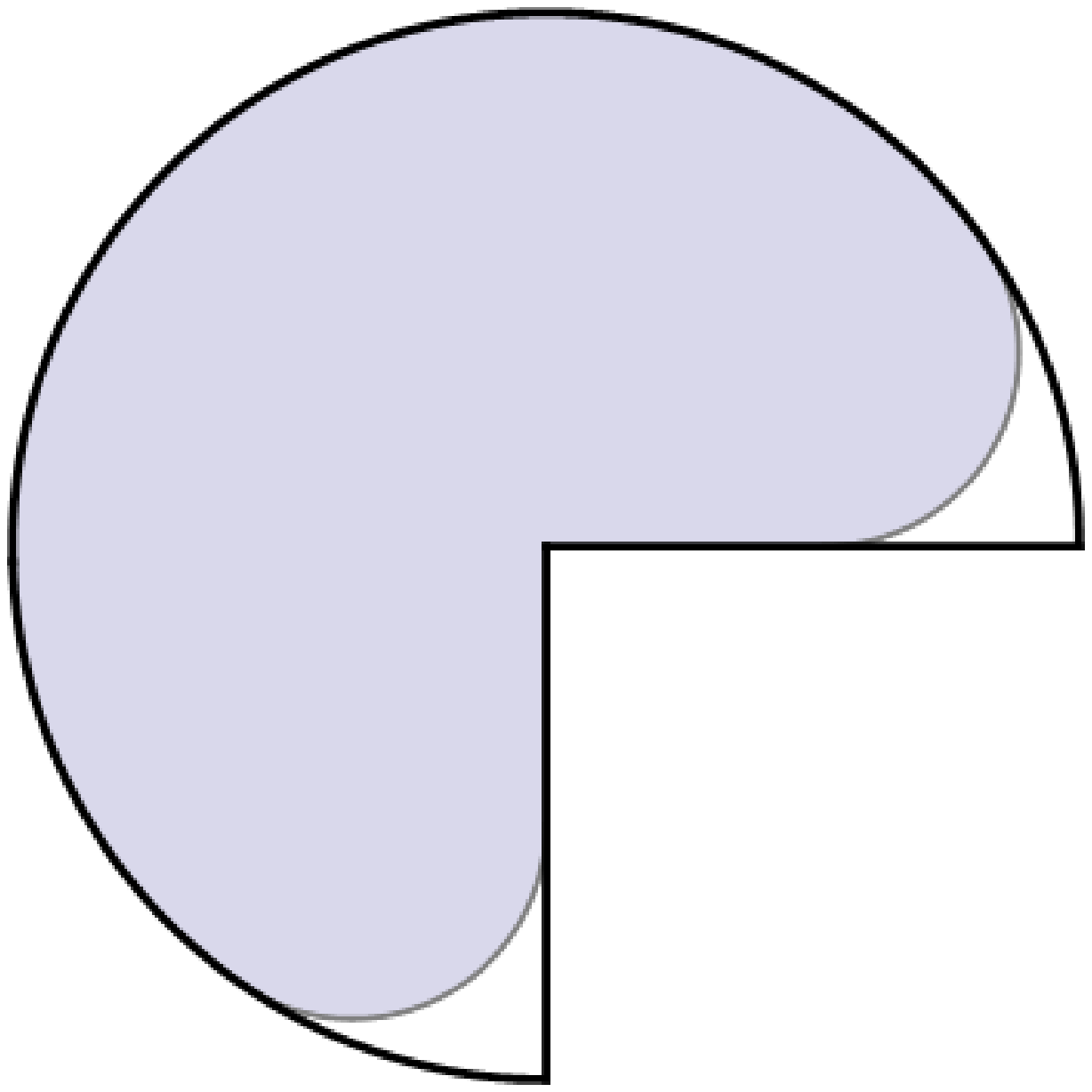}
& \includegraphics[width=0.3\textwidth]{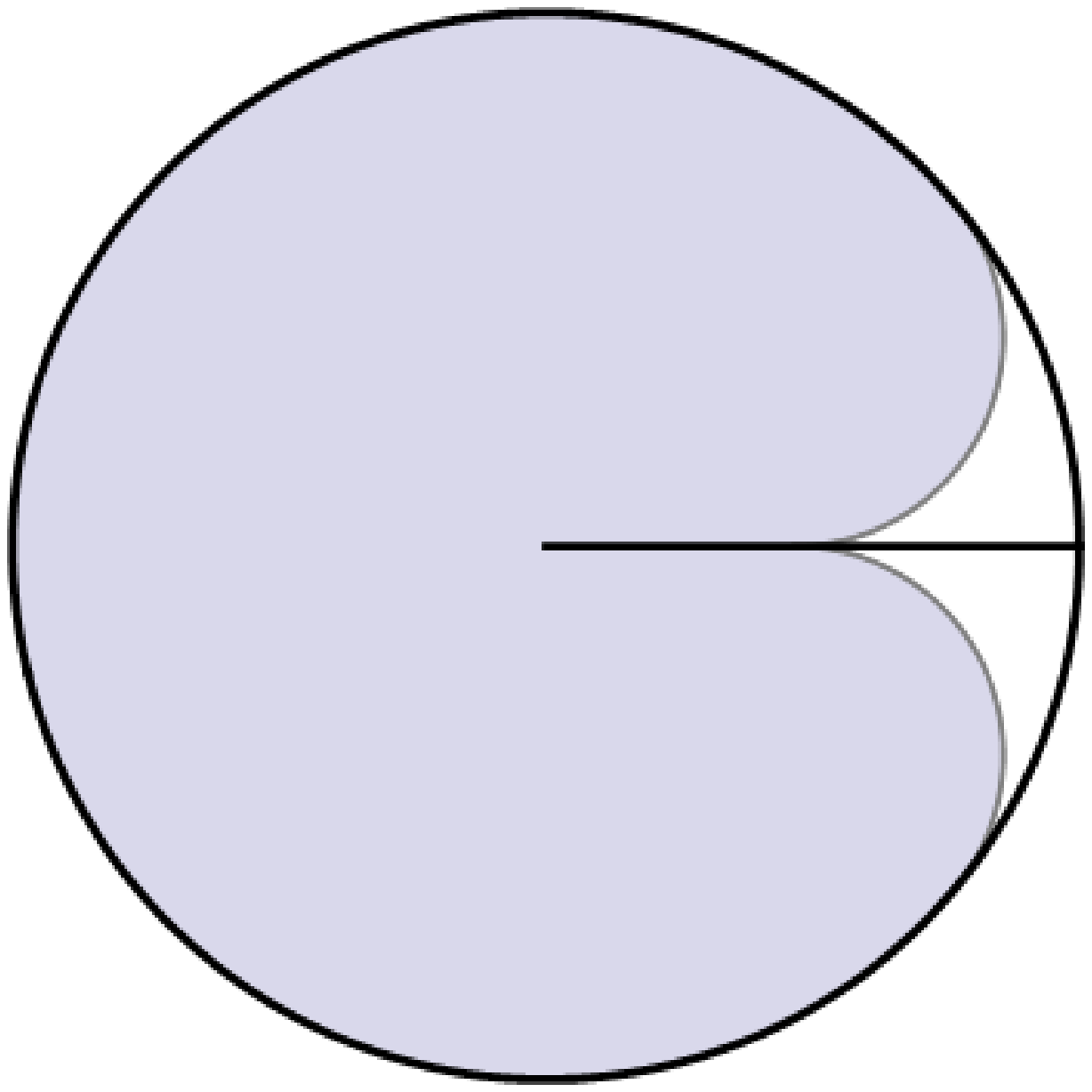}
\\
$\alpha=\pi$
& $\alpha=3\pi/2$
& $\alpha=2\pi$
\end{tabular}
\end{center}
\caption{The sectors and their Cheeger sets (light gray)
}
\label{Fig.sector.sets}
\end{figure}

\subsection*{Acknowledgement}
The authors acknowledge the hospitality of the Nuclear Physics Institute ASCR
in \v{R}e\v{z} and the University of Pavia.
This work was partially supported by the Czech Ministry of Education, Youth and Sports within the project LC06002, by the Spanish Ministry of Education and Science within the project MTM2008-03541, and by the ERC within the Advanced Grant n. 226234 and the Starting Grant n. 258685.

\end{document}